\newtheorem{theorem}{Theorem}[section]
\newtheorem{prop}[theorem]{Proposition}
\newtheorem{lemma}[theorem]{Lemma}
\newtheorem{coro}[theorem]{Corollary}
\newtheorem{prop-def}{Proposition-Definition}[section]
\theoremstyle{definition}
\newtheorem{defn}[theorem]{Definition}
\newtheorem{remark}[theorem]{Remark}
\newtheorem{exam}[theorem]{Example}
\newcommand{\nc}{\newcommand}
\nc{\delete}[1]{{}}
\nc{\mmargin}[1]{}
\nc{\mlabel}[1]{\label{#1}}  
\nc{\mcite}[1]{\cite{#1}}  
\nc{\mref}[1]{\ref{#1}}  
\nc{\meqref}[1]{\eqref{#1}} 
\nc{\mbibitem}[1]{\bibitem{#1}} 
\nc{\mlabel}[1]{\label{#1}  
{\hfill \hspace{1cm}{\bf{{\ }\hfill(#1)}}}}
\nc{\mcite}[1]{\cite{#1}{{\bf{{\ }(#1)}}}}  
\nc{\mref}[1]{\ref{#1}{{\bf{{\ }(#1)}}}}  
\nc{\meqref}[1]{\eqref{#1}{{\bf{{\ }(#1)}}}} 
\nc{\mbibitem}[1]{\bibitem[\bf #1]{#1}} 
\nc{\cmttv}{}
\nc{\twrba}{that is a twisted Rota-Baxter algebra\xspace}
\nc{\shadow}{phantom\xspace}
\nc{\Shadow}{Phantom\xspace}
\nc{\shad}{\theta}
\nc{\tforall}{\text{ for all }}
\nc{\ddiff}{unit-modified differential\xspace}
\nc{\ddiffrey}{differential Reynolds\xspace}
\nc{\oplin}{operator linear\xspace}
\nc{\fopav}{F_A(\Omega,V)}
\nc{\tay}{power\xspace}
\nc{\tw}{twisted\xspace}
\nc{\wtd}{weighted\xspace}
\nc{\wtddiff}{modified differential\xspace}
\nc{\intdiff}{integro-differential\xspace}
\nc{\wtdintdiff}{modified integro-differential\xspace}
\nc{\wt}{weight\xspace}
\nc{\wte}{\lambda}
\nc{\coa}{{\frak O\frak C\frak A}} 
\nc{\oa}{{\frak O\frak A}} 
\nc{\ctoa}{{\frak T\frak O\frak C\frak A}} 
\nc{\toa}{{\frak T\frak O\frak A}} 
\nc{\om}{\mathfrak{OM}_A}
\nc{\ocm}{\mathfrak{OCM}_A}
\nc{\frakO}{\mathfrak{O}}
\nc{\frakD}{\mathfrak{D}}
\nc{\oam}{\frakO_A\frakM}
\nc{\ocam}{\frakC\frakC\frakO_A\frakM}
\nc{\moverline}[1]{\mathrm{cl}({#1})}
\nc{\compopgen}{compatible complete operated $A$-module\xspace}
\nc{\compopgens}{compatible complete operated $A$-modules\xspace}
\nc{\shortneed}{{\rm CCO$_A$M}\xspace}
\nc{\shortneeds}{{\rm CCO$_A$Ms}\xspace}
\nc{\ocmra}{compatible weighted Reynolds algebra\xspace}
\nc{\ocmras}{compatible weighted Reynolds algebras\xspace}
\nc{\ocmdra}{compatible \ddiffrey algebra\xspace}
\nc{\ocmdras}{compatible \ddiffrey algebras\xspace}
\nc{\algw}{\frakS}	
\nc{\opw}{\Delta}	
\nc{\ralgw}{\frakI}	
\nc{\ropw}{\Pi}		
\nc{\sett}{\calt}	
\nc{\algt}{\frakT}	
\nc{\opt}{\Lambda}	
\nc{\rsett}{\cale}	
\nc{\ralgt}{\frakE}	
\nc{\ropt}{\Gamma}	
\nc{\tprod}{\veebar}
\nc{\graftprod}{grafting product\xspace}
\nc{\extnop}{extension operator\xspace}
\nc{\frakT}{\mathfrak{T}}
\nc{\frakJ}{\mathfrak{J}}
\nc{\frakI}{\mathfrak{I}}
\nc{\eva}{\mathrm{eva}}
\nc{\mfraka}{\tau}
\nc{\trsha}{{\mathfrak F}_{\rm{RRB}}}
\nc{\reypr}{\circledast}	
\nc{\lc}{\lfloor} \nc{\rc}{\rfloor}
\nc{\free}[1]{\overline{#1}}
\nc{\Id}{\mathrm{Id}}
\nc{\lra}{\longrightarrow}
\nc{\hra}{\hookrightarrow}
\nc{\mrba}{matching Rota-Baxter algebra\xspace}
\nc{\mrbas}{matching Rota-Baxter algebras\xspace}
\nc{\rsha}{\sha^{\rm rel}}
\newcommand{\bk}{{\mathbf{k}}}
\nc{\vep}{\varepsilon}
\nc{\bin}[2]{ (_{\stackrel{\scs{#1}}{\scs{#2}}})}  
\nc{\binc}[2]{(\!\! \begin{array}{c} \scs{#1}\\
    \scs{#2} \end{array}\!\!)}  
\nc{\bincc}[2]{  ( {\scs{#1} \atop
    \vspace{-1cm}\scs{#2}} )}  
\nc{\bs}{\bar{S}}
\nc{\ra}{\longleftarrow}
\nc{\ot}{\otimes}
\nc{\rar}{\rightarrow}
\nc{\dar}{\downarrow}
\nc{\dap}[1]{\downarrow \rlap{$\scriptstyle{#1}$}}
\nc{\defeq}{\stackrel{\rm def}{=}}
\nc{\dis}[1]{\displaystyle{#1}}
\nc{\dotcup}{\ \displaystyle{\bigcup^\bullet}\ }
\nc{\hcm}{\ \hat{,}\ }
\nc{\hts}{\hat{\otimes}}
\nc{\hcirc}{\hat{\circ}}
\nc{\lleft}{[}
\nc{\lright}{]}
\nc{\curlyl}{\left \{ \begin{array}{c} {} \\ {} \end{array}
    \right .  \!\!\!\!\!\!\!}
\nc{\curlyr}{ \!\!\!\!\!\!\!
    \left . \begin{array}{c} {} \\ {} \end{array}
    \right \} }
\nc{\longmid}{\left | \begin{array}{c} {} \\ {} \end{array}
    \right . \!\!\!\!\!\!\!}
\nc{\ora}[1]{\stackrel{#1}{\rar}}
\nc{\ola}[1]{\stackrel{#1}{\la}}
\nc{\scs}[1]{\scriptstyle{#1}} \nc{\mrm}[1]{{\rm #1}}
\nc{\dirlim}{\displaystyle{\lim_{\longrightarrow}}\,}
\nc{\invlim}{\displaystyle{\lim_{\longleftarrow}}\,}
\nc{\dislim}[1]{\displaystyle{\lim_{#1}}} \nc{\colim}{\mrm{colim}}
\nc{\mvp}{\vspace{0.3cm}} \nc{\tk}{^{(k)}} \nc{\tp}{^\prime}
\nc{\ttp}{^{\prime\prime}} \nc{\svp}{\vspace{2cm}}
\nc{\vp}{\vspace{8cm}}
\nc{\modg}[1]{\!<\!\!{#1}\!\!>}
\nc{\intg}[1]{F_C(#1)}
\nc{\lmodg}{\!<\!\!}
\nc{\rmodg}{\!\!>\!}
\nc{\cpi}{\widehat{\Pi}}
\nc{\sha}{{\mbox{\cyr X}}}  
\nc{\ssha}{{\mbox{\cyrs X}}} 
\nc{\tsha}{{\mbox{\cyrt X}}}
\nc{\shai}{{\stackrel{\ra}{\sha}}}
\nc{\sshai}{{\stackrel{\ra}{\ssha}}}
\nc{\shpr}{\diamond}    
\nc{\labs}{\mid\!}
\nc{\rabs}{\!\mid}
\nc{\dr}{\frakR}
\nc{\cdr}{\frakC\frakR}
\nc{\er}{\frakW\frakR}
\nc{\cer}{\frakC\frakW\frakR}
\nc{\fdr}{\frakF\frakC\frakR}
\nc{\fer}{\frakF\frakC\frakW\frakR}
\nc{\DR}{\overline{\frakR}}
\nc{\CDR}{\overline{\frakC\frakR}}
\nc{\FDR}{\overline{\frakF\frakC\frakR}}
\nc{\ER}{\overline{\frakW\frakR}}
\nc{\CER}{\overline{\frakC\frakW\frakR}}
\nc{\FER}{\overline{\frakF\frakC\frakW\frakR}}
\font\cyr=wncyr10
\font\cyrs=wncyr7
\font\cyrt=wncyr5
\nc{\ann}{\mrm{ann}}
\nc{\Aut}{\mrm{Aut}}
\nc{\can}{\mrm{can}}
\nc{\Cont}{\mrm{Cont}}
\nc{\rchar}{\mrm{char}}
\nc{\cok}{\mrm{coker}}
\nc{\dtf}{{R-{\rm tf}}}
\nc{\dtor}{{R-{\rm tor}}}
\nc{\Div}{{\mrm Div}}
\nc{\End}{\mrm{End}}
\nc{\Ext}{\mrm{Ext}}
\nc{\Fil}{\mrm{Fil}}
\nc{\Fr}{\mrm{Fr}}
\nc{\Frob}{\mrm{Frob}}
\nc{\Gal}{\mrm{Gal}}
\nc{\GL}{\mrm{GL}}
\nc{\Hom}{\mrm{Hom}}
\nc{\hsr}{\mrm{H}}
\nc{\hpol}{\mrm{HP}}
\nc{\id}{\mrm{id}}
\nc{\im}{\mrm{im}}
\nc{\incl}{\mrm{incl}}
\nc{\length}{\mrm{length}}
\nc{\mforall}{\quad \text{for all }}
\nc{\mchar}{\rm char}
\nc{\mpart}{\mrm{part}}
\nc{\ql}{{\QQ_\ell}}
\nc{\qp}{{\QQ_p}}
\nc{\rank}{\mrm{rank}}
\nc{\rcot}{\mrm{cot}}
\nc{\rdef}{\mrm{def}}
\nc{\rdiv}{{\rm div}}
\nc{\rtf}{{\rm tf}}
\nc{\rtor}{{\rm tor}}
\nc{\res}{\mrm{res}}
\nc{\SL}{\mrm{SL}}
\nc{\Spec}{\mrm{Spec}}
\nc{\tor}{\mrm{tor}}
\nc{\Tr}{\mrm{Tr}}
\nc{\tr}{\mrm{tr}}
\nc{\bfk}{{\bf k}}
\nc{\bfone}{{\bf 1}}
\nc{\bfzero}{{\bf 0}}
\nc{\Diff}{\mathbf{Diff}}
\nc{\FMod}{\mathbf{FMod}}
\nc{\Int}{\mathbf{Int}}
\nc{\Mon}{\mathbf{Mon}}
\nc{\remarks}{\noindent{\bf Remarks: }}
\nc{\Rep}{\mathbf{Rep}}
\nc{\Rings}{\mathbf{Rings}}
\nc{\Sets}{\mathbf{Sets}}
\nc{\BA}{{\mathbb A}}   \nc{\CC}{{\mathbb C}}
\nc{\DD}{{\mathbb D}}   \nc{\EE}{{\mathbb E}}
\nc{\FF}{{\mathbb F}}   \nc{\GG}{{\mathbb G}}
\nc{\HH}{{\mathbb H}}   \nc{\LL}{{\mathbb L}}
\nc{\NN}{{\mathbb N}}   \nc{\PP}{{\mathbb P}}
\nc{\QQ}{{\mathbb Q}}   \nc{\RR}{{\mathbb R}}
\nc{\TT}{{\mathbb T}}   \nc{\VV}{{\mathbb V}}
\nc{\ZZ}{{\mathbb Z}}   \nc{\TP}{\widetilde{P}}
\nc{\cala}{{\mathcal A}}    \nc{\calc}{{\mathcal C}}
\nc{\cald}{\mathcal{D}}     \nc{\cale}{{\mathcal E}}
\nc{\calf}{{\mathcal F}}    \nc{\calg}{{\mathcal G}}
\nc{\calh}{{\mathcal H}}    \nc{\cali}{{\mathcal I}}
\nc{\call}{{\mathcal L}}    \nc{\calm}{{\mathcal M}}
\nc{\caln}{{\mathcal N}}    \nc{\calo}{{\mathcal O}}
\nc{\calp}{{\mathcal P}}    \nc{\calr}{{\mathcal R}}
\nc{\cals}{{\mathcal S}}    \nc{\calt}{{\mathcal T}}
\nc{\calw}{{\mathcal W}}    \nc{\calx}{{\mathcal X}}
\nc{\CA}{\mathcal{A}}
\nc{\fraka}{{\mathfrak a}}
\nc{\frakb}{{\mathfrak b}}
\nc{\frakc}{{\mathfrak c}}
\nc{\frakd}{{\mathfrak d}}
\nc{\frake}{{\mathfrak e}}
\nc{\fraki}{{\mathfrak i}}
\nc{\frakj}{{\mathfrak j}}
\nc{\frakk}{{\mathfrak k}}
\nc{\frakB}{{\mathfrak B}}
\nc{\frakC}{{\mathfrak C}}
\nc{\frakE}{{\mathfrak E}}
\nc{\frakF}{{\mathfrak F}}
\nc{\frakG}{{\mathfrak G}}
\nc{\frakm}{{\mathfrak m}}
\nc{\frakM}{{\mathfrak M}}
\nc{\frakp}{{\mathfrak p}}
\nc{\frakR}{{\mathfrak R}}
\nc{\frakS}{{\mathfrak S}}
\nc{\frakA}{{\mathfrak A}}
\nc{\frakW}{{\mathfrak W}}
\nc{\fraku}{{\mathfrak u}}
\nc{\frakv}{{\mathfrak v}}
\nc{\frakw}{{\mathfrak w}}
\nc{\frakx}{{\mathfrak x}}
\nc{\fraky}{{\mathfrak y}}
\nc{\vsa}{\vspace{-.1cm}}
\nc{\vsb}{\vspace{-.2cm}}
\nc{\vsc}{\vspace{-.3cm}}
\nc{\vsd}{\vspace{-.4cm}}
\nc{\vse}{\vspace{-.5cm}}
\nc{\ynr}[1]{\textcolor{blue}{\underline{Yunnan:}#1 }}
\nc{\lir}[1]{\textcolor{red}{\underline{Li:}#1 }}
\nc{\rgr}[1]{\textcolor{orange}{\underline{Richard:}#1 }}
\begin{document}

\title[Reynolds algebras from Volterra integrals and complete shuffle product]{
Generalized Reynolds algebras from Volterra integrals and their free construction by complete shuffle product	}

\author{Li Guo}
\address{Department of Mathematics and Computer Science, Rutgers University, Newark, NJ 07102, United States}
\email{liguo@rutgers.edu}

\author{Richard Gustavson}
\address{Department of Mathematics, Farmingdale State College, Farmingdale, NY 11735, United States}
\email{gustavr@farmingdale.edu}

\author{Yunnan Li}
\address{School of Mathematics and Information Science, Guangzhou University, Guangzhou 510006, China}
\email{ynli@gzhu.edu.cn}

\date{\today}

\begin{abstract}
	This paper introduces algebraic structures for Volterra integral operators with separable kernels, in the style of differential algebra for derivations and Rota-Baxter algebra for operators with kernels dependent solely on a dummy variable. We demonstrate that these operators satisfy a generalization of the algebraic identity defining the classical Reynolds operator, which is rooted in Reynolds's influential work on fluid mechanics.
	To study Volterra integral operators and their integral equations through this algebraic lens, particularly in providing a general form of these integral equations, we construct free objects in the category of algebras equipped with generalized Reynolds operators and the associated differential operators, termed differential Reynolds algebras. Due to the cyclic nature of the Reynolds identity, the natural rewriting rule derived from it does not terminate. To address this challenge, we develop a completion for the underlying space, where a complete shuffle product is defined for the free objects. We also include examples and applications related to Volterra integral equations.
\end{abstract}

\subjclass[2020]{
45D05, 
17B38, 
12H05, 
16S10, 
47G20, 
47A62. 
}

\keywords{Volterra integral equation, Volterra operator,  Reynolds algebra, Rota-Baxter algebra, differential algebra, completion, shuffle product, free object}

\maketitle

\vspace{-1cm}

\tableofcontents

\vspace{-1cm}

\allowdisplaybreaks

\section{Introduction}
This paper studies the algebraic linear identities satisfied by separable Volterra integral operators. Motivated by giving an algebraic framework for the integral equations of such integral operators, the free objects in the corresponding algebraic categories are constructed by completing the shuffle product. More precisely, this paper provides
\begin{enumerate}
\item[(i)] a precise algebraic framework for studying separable Volterra integral equations via differential Reynolds algebras.  This generalization of the standard Reynolds algebra allows for an algebraic version of the separable Volterra integral operator that takes into account the kernel of the integral, as well as a modified differential operator that together produce a generalized algebraic version of the Fundamental Theorem of Calculus;
\item[(ii)] the construction of the free objects in the category of commutative differential Reynolds algebras.  Due to the recursive nature of the Reynolds identity, this requires taking the completion of the underlying space, with corresponding complete shuffle product.  In the case of separable Volterra integral equations, this completion is made explicit as the space of formal power series with corresponding integral and differential operators.
\end{enumerate}

\subsection{Algebraic operators from analysis}
A remarkable phenomenon in the development of mathematics is that interesting algebraic operators have mostly been introduced from analytic studies, providing examples, stimulations and motivations to the general study of algebraic operators that satisfy operator identities. In the other direction, the study of these algebraic operators provides a general framework to understand the analytic phenomena. 

A case study is the relationship between differential algebras and differential equations. Almost a century ago, from his study of differential equations, Ritt~\mcite{Rit} introduced the algebraic notion of {\bf differential operators}, by extracting the Leibniz rule
$$ d(xy)=d(x)y+xd(y)
$$
satisfied by the derivation in analysis. In the following decades, the theory has been fully developed to include branches such as differential Galois theory, differential algebraic geometry and differential algebraic groups~\mcite{Kol,PS}, with broad connections to areas in mathematics and mathematical physics~\mcite{FLS,MS,Wu}.
On the other hand, differential algebra has powerful applications back to differential equations as shown already in the work of Ritt. On a concrete and fundamental level, the construction of free differential algebras provides a uniform framework to consider all differential equations, leading to such developments as differential decomposition algorithms to help solve systems of algebraic differential equations and solutions to the parameter identifiability problem for input-output equations~\mcite{BLOP,OPT}.

Algebraic abstractions for integral operators have appeared in various forms due to the different notions of integrations. The most well-known and also the simplest is the {\bf Rota-Baxter operator of weight zero}:
$$ P(x)P(y)=P(xP(y))+P(P(x)y),$$
modeled after the integration by parts formula for the simple integral operator $I(f)(x)\coloneqq \int_a^xf(t)\,dt$. In general, a {\bf Rota-Baxter operator of weight $\lambda$}, defined by
$$ P(x)P(y)=P(xP(y))+P(P(x)y)+\lambda P(xy)$$
for a fixed scalar $\lambda$, arose from the probability study of G. Baxter~\mcite{Bax} in 1960 and pursued further by Atkinson, Cartier, and especially Rota, in the following decades~\mcite{At,Ca,Ro}. The algebraic abstraction with the varying weight leads to broad applications in combinatorics, number theory and quantum field theory~\mcite{CK1,Gub,Ro0}.

The construction of free Rota-Baxter algebras of weight zero by tensor powers and the shuffle product~\mcite{GK1,Ro} underlies the structure and multiplication of iterated integrals with simple kernels (depending only on the dummy variable; see Definition~\mref{de:volop}), which can be tracked back to the work of K.-T. Chen~\mcite{Ch,Ch2} in differential geometry and have found broad applications from multiple zeta values and Hodge theory to rough paths~\mcite{FH,Ha,IKZ,Ra}. The free objects also give a general form of Volterra equations with simple kernels~\mcite{GGL}. 

Furthermore, the First Fundamental Theorem of Calculus abstracts to the notions of {\bf differential Rota-Baxter algebras} and {\bf integro-differential algebras}, applied to the algebraic study of boundary problems for linear ordinary differential equations~\mcite{GGR,GK3,GRR,HRR,RaR,RR}. This algebraic theory has produced applications such as finding solutions to integro-differential equations and contributions to parameter estimation techniques for nonlinear dynamical systems~\mcite{Bav,BKLPPU,BLR,QR2,RRTB}.

\subsection{Volterra operators and Reynolds operators}
More general than the simple integral operator $I(f)(x)=\int_a^xf(t)\,dt$ is the {\bf Volterra operator}
\[ P_K(f)(x)\coloneqq P_{K,a}(f)(x) \coloneqq  \int_a^x K(x,t)f(t)\,dt,  \vsa\]
where $K(x,t)$ is a fixed function, called the kernel.
Such operators and the Fredholm integral operators are the two primary types of integral operators and integral equations, named after the two founders of the theory of integral equations.
See~\mcite{Trb,Vo,Wa,Ze} for the role the operators play in the general theory of integral equations, and \mcite{GGL} for a recent algebraic study of Volterra operators of which the present paper is a sequel. As noted in~\mcite{GGL}, the operator $P_K$ satisfies the Rota-Baxter operator only in the very special case when $K(x,t)$ depends on $t$ only.

Our {\em first goal} in this study is to establish the operator identity satisfied by $P_K$ when $K(x,t)$ is separable in the sense that $K(x,t)=k(x)h(t)$ for single variable functions $k(x)$ and $h(t)$ (instead of the more general form of $\sum_{i}k_i(x)h_i(t)$). Then the Volterra operator is called {\bf separable}.
It is here that the Reynolds operator enters the picture.

The {\bf Reynolds operator of weight $\lambda$} is defined by the operator identity
\begin{equation*}
	R(x)R(y)=R(xR(y))+R(R(x)y)-\lambda R(R(x)R(y))
	\mlabel{eq:rey}
\end{equation*}
for a given scalar $\lambda$. The operator (when $\lambda=1$) first arose from O. Reynolds' famous work on turbulence theory in fluid mechanics~\mcite{Re} and attracted interests of R.~Birkhoff and G.-C.~Rota~\mcite{Bi,RoR,Ro1} in the 1960s.  See~\mcite{AS,CHK,Uc,ZtGG,ZtGG0} for recent studies related to algebra, combinatorics and invariant theory.

An important example of the Reynolds operator is the separable Volterra operator $P_K$ when $K(x,t)=e^{x-t}$~\mcite{RoR}.
In this paper we generalize the notion of a Reynolds algebra to have more general weights, defined by the {\bf \wtd Reynolds identity}
\begin{equation}
R(f)R(g) = R(R(f)g) + R(fR(g)) - R(\lambda R(f) R(g))
\mlabel{eq:grey}
\end{equation}
for a fixed element $\lambda$ in the underlying algebra, instead of only a scalar.
We will show (Theorem~\mref{thm:sep}) that, when $K(x,t)$ is separable, the Volterra operator $P_K$ is a weighted Reynolds operator of weight $\lambda$.
To determine the weight $\lambda$, associated to this \wtd Reynolds operator is a \wtd differential operator $D_K$ which serves as the left inverse of the \wtd Reynolds operator, generalizing the First Fundamental Theorem of Calculus. Then the weight $\lambda$ is $D_K(1)$. The two operators $D_K$ and $P_K$, together with their coupling, form the key notions of this paper, called the {\bf \ddiff operator}, the {\bf \wtd Reynolds operator} and the {\bf \ddiffrey operator} respectively (see Definition~\mref{de:rrb}).
As a general phenomenon, these operators also capture the algebraic properties of a differential or Rota-Baxter operator of weight zero after pre- or post-composed by a left multiplication operator by a fixed element (Theorem~\mref{th:composing}). 

\vsb
\subsection{Free objects for \ddiffrey algebras}
The {\em second goal} of this paper is to construct the free objects in the category of commutative \ddiffrey algebras. Here as in the case of differential equations and Volterra integral equations with simple kernels mentioned above, our motivation is to use the free objects to provide a general framework for Volterra integral equations with separable kernels and to express such equations in terms of iterated integrals.

Free objects in an algebraic category are often obtained by regarding the defining identity of the algebraic structure as a rewriting system from which the irreducible elements form a linear basis of the free object. This is the approach for the previously discussed differential, Rota-Baxter and several other operators. 

However, since the left-hand side of the defining identity of the weighted Reynolds operator in Eq.~\meqref{eq:grey} also appears on the right-hand side, the resulting cyclic rewriting system, 
\begin{equation} R(f)R(g)\mapsto R(R(f)g) + R(fR(g)) - R(\lambda R(f) R(g)),
 \vsa
\mlabel{eq:rrr}
\end{equation}
by taking the left-hand side as the leading term, will not terminate. In previous studies of the free Reynolds algebras (in the classical case)~\mcite{ZtGG,ZtGG0}, a different rewriting system was extracted from the Reynolds identity in order to bypass this difficulty.
In the more recent paper~\mcite{GGL}, the cyclic rewriting system in Eq.~\meqref{eq:rrr} is converted to a finite rewriting system at the presence of additional restrictions. 

In this paper, this cyclic difficulty is tackled directly and is used as an opportunity to understand the topological aspect of rewriting systems~\mcite{Che,Kob}.
In order for the infinite process to make sense, we introduce a complete space for which the leading term reappears after the rewriting but with a higher order for the completion (Definition~\mref{def:com}).
Repeated applications of the rewriting lead to a formal series in the complete space. This can be regarded as an algebraic counterpart of the method of successive substitution in solving integral equations~\cite[\S3.7]{Wa}.

In a similar context, complete Rota-Baxter algebras were studied in~\mcite{GK2}, generalizing the process of completing the polynomial algebra to the power series algebra. In contrast to the Rota-Baxter case, a complete topological vector space is indispensable to define the multiplication in the free objects for Reynolds algebras.
Many years ago, Rota made the observation that the Reynolds operator is an ``infinitesimal analog" of the Rota-Baxter operator~\mcite{Ro1}. In confirmation to this observation, we show that
the shuffle product that defines the multiplication in a free Rota-Baxter algebra expands to a complete shuffle product that defines the multiplication in a free Reynolds algebra.
 \vsc
\subsection{Outline of the paper}

Here is a summary of the paper.

In Section~\mref{sec:diffrey}, we first introduce a class of linear operators arising from Volterra operators with separable kernels, including the \ddiff operator, the \wtd Reynolds operator and the \ddiffrey operator. The \ddiff operator is a left inverse of the \wtd Reynolds operator with a special weight, generalizing the algebraic formulation of the First Fundamental Theorem of Calculus as given in~\mcite{GK3,RaR}. We then give their realizations as Volterra operators with separable kernels.  In particular, a Volterra integral operator is a \wtd Reynolds operator with a corresponding \ddiff operator (Theorem~\mref{thm:sep}). So \ddiffrey algebras provide an algebraic context to study separable Volterra integral equations. Properties of these new operators are obtained.

Section~\mref{sec:freeobj} has its goal of constructing free \ddiffrey algebras. To overcome the difficulty caused by the cyclic property of the \wtd Reynolds identity, we begin with a general notion of an operated module that is complete with respect to a filtration which is compatible with the linear operator. Then the usual shuffle product defined on tensor-power polynomials is given a complete version on tensor-power series, which then is applied to construct the multiplication in the free objects in a subcategory of the \ddiffrey algebras, called the category of commutative \ocmdras (Theorem~\mref{thm:freedrey}).
We finally apply the free construction to the analytic setting of Volterra integral equations by examples. 

\smallskip

\noindent
{\bf Notations.} In this paper, we fix a ground field $\bfk$ of characteristic 0. All the objects
under discussion, including vector spaces, algebras and tensor products, are taken over $\bfk$ unless otherwise specified.
An algebra is assumed to be a unitary associative algebra.

\section{Algebraic structures from Volterra integral equations}
\mlabel{sec:diffrey}

Studies on integral operators and integral equations motivate us to introduce a new class of algebraic operators, generalizing the well-known concepts of the differential operator, Reynolds operator and Rota-Baxter operator.

\subsection{Algebraically defined operators}

We present the various algebraic structures that have arisen from the study of Volterra integral operators, by listing the existing ones first and then introducing the new ones. Together, they will be the main algebraic structures treated in this study.
\begin{defn}\label{de:rbr}
	\begin{enumerate}
		\item  		\label{it:op}
		A {\bf pointed algebra} is an algebra $R$ together with a fixed element $\lambda\in R$.
		\item An {\bf operated algebra} \mcite{Gop} is a pair $(R,P)$ consisting of an algebra $R$ and a linear operator $P$ on $R$.
		\item 	\mlabel{it:rb}
		For a fixed $\lambda \in \bfk$, a {\bf Rota-Baxter algebra of weight} $\lambda$ is an algebra $R$ together with a linear operator $P:R \to R$ satisfying
\begin{equation}
\mlabel{eq:rb}
		P(f)P(g) = P(fP(g)) + P(P(f)g) + \lambda P(fg) \quad \text{for all } f,g \in R.
\end{equation}
\item \mlabel{it:rey}
A {\bf Reynolds algebra} is an algebra $R$ together with a linear operator $P:R \to R$ satisfying
\begin{equation}
\mlabel{eq:rey2}
P(f)P(g) = P(fP(g)) + P(P(f)g) - P(P(f)P(g)) \quad \text{for all } f,g \in R.
\end{equation}
\item \mlabel{it:diff}
For a given $\lambda\in \bfk$, a {\bf differential algebra of weight $\lambda$}~\mcite{GK3} is an algebra $R$ equipped with a linear operator $d$ satisfying
\begin{equation} \mlabel{eq:wtdif}
	d(xy)=d(x)y+xd(y)+\lambda d(x)d(y) \quad \text{for all } x, y\in R.	
\end{equation}
\item \mlabel{it:intdiff}
An {\bf \intdiff algebra  of weight $\lambda$}~\mcite{GRR,RR} is a differential algebra $(R,d)$ of weight $\lambda$ with a linear operator $p$ on $R$
satisfying $dp=\id_R$ and the {\bf \intdiff identity}
\begin{equation}\mlabel{intdiff}
p(d(x)))p(d(y))=p(d(x))y+xp(d(y))-p(d(xy))\quad \text{for all }  x,y\in R.
\end{equation}
\end{enumerate}
\end{defn}

A submodule $I$ of an operated algebra $(R,P)$ is called an {\bf operated ideal} if $I$ is an (algebraic) ideal of $R$ such that $P(I)\subseteq I$. Then the quotient module $R/I$ has the induced operated algebra structure. The same notions apply to any of the special classes of operated algebras listed above and those to be introduced later.

Each of the above classes of algebras forms a category with the morphisms defined as follows.
\begin{enumerate}
	\item A homomorphism from a pointed algebra $(A,\lambda_A)$ to $(B,\lambda_B)$ is an algebra homomorphism $f:A\to B$ such that $f(\lambda_A)=\lambda_B$.
	\item For operated algebras $(R_1,P_1)$ and $(R_2,P_2)$, an algebra homomorphism $\varphi:R_1\rar R_2$ is called an {\bf operated algebra homomorphism} if $\varphi P_1=P_2 \varphi$. \item
	The same definition also applies when operated algebra is replaced by any of the special cases in Items~\meqref{it:rb} -- \meqref{it:intdiff}.
\end{enumerate}

We now introduce new types of operated algebras which also have realizations as Volterra integral operators.
Note that, while traditionally, the weight $\lambda$ of a differential or Rota-Baxter operator is referring to an element in the base ring $\bfk$; here the notion has been generalized to allow $\lambda$ to be in the algebra $R$ itself.

\begin{defn}
	\begin{enumerate}
		\item 
		A {\bf \wtddiff algebra of weight $\wte\in R$} is a pointed algebra $(R,\wte)$ with a linear operator $D:R\rar R$, called a {\bf \wtddiff operator}\footnote{When $\wte$ is a scalar, the operator is one of the differential type operators in~\mcite{GSZ} and the term \wtddiff operator is used in~\mcite{PZGL}. This terminology is justified by Lemma~\mref{lem:twistdiff}.} satisfying
		\begin{equation}\mlabel{wda}
			D(xy)=D(x)y+xD(y)-x\wte y,\quad  x,y\in R.
		\end{equation}		
\item A unital \wtddiff algebra $(R,D,\wte)$  with $\wte=D(1_R)$ is called a {\bf \ddiff algebra}.
So  $D$ satisfies
	\begin{equation}\mlabel{Dda}
	D(xy)=D(x)y+xD(y)-xD(1_R)y,\quad  x,y\in R.
	\end{equation}
\item
A {\bf Reynolds algebra of weight $\wte$} or simply a {\bf  \wtd Reynolds algebra} is a triple $(R,P,\wte)$ consisting of a pointed algebra $(R,\wte)$ and a linear operator $P:R\rar R$, called a {\bf \wtd Reynolds operator} (of weight $\wte$), satisfying
		\begin{equation}\mlabel{Gra}
			P(x)P(y)=P(P(x)y)+P(xP(y))-P(P(x)\wte P(y)),\quad  x,y\in R.
		\end{equation}	
\item
A {\bf modified differential Reynolds algebra of weight $\wte$}, which we will simply call a {\bf differential Reynolds algebra of weight $\wte$},  is a quadruple $(R,D,P,\wte)$ such that $(R,D,\wte)$ is a modified differential algebra of weight $\wte$, $(R,P,\wte)$ is a Reynolds algebra of weight $\wte$, and $D P=\id_R$ holds.
		
\item A {\bf \ddiff Reynolds algebra}, which we will simply call a {\bf \ddiffrey algebra}, is a unital differential Reynolds algebra $(R,D,P,D(1_R))$ of weight $D(1_R)$. So both Eq.~\meqref{Dda} and the following equations hold.
		\begin{equation}
			P(x)P(y)=P(P(x)y)+P(xP(y))-P(P(x)D(1_R)P(y)),
			\quad	D P=\id_R,
			\quad  x,y\in R.
			\mlabel{Dra}	
		\end{equation}		
\item 
A {\bf \wtdintdiff algebra} $(R,D,P)$ is a \ddiff algebra $(R,D)$ with a linear operator $P$ on $R$, called an {\bf integration}, satisfying $DP=\id_R$ and the {\bf \wtdintdiff identity}
\begin{equation}\mlabel{wtdint}
		\begin{split}
&P(D(x)))P(D(y))\\
  &=P(D(x))y+xP(D(y))-P(D(xy))-P(D(1_R))\big(xy-P(D(xy))\big),\quad  x,y\in R.
		\end{split}
\end{equation}
	\end{enumerate}
	\mlabel{de:rrb}	
\end{defn}

\begin{remark} 
\begin{enumerate}
\item
Note the difference between a differential algebra of weight $\lambda$ and a \wtddiff algebra as indicated in the last terms of their defining equations \meqref{eq:wtdif} and \meqref{wda}.
\item 
If a \wtddiff algebra $(R,D,\wte)$ of weight $\wte$ is unital,  then taking $x=y=1_R$ in Eq.~\eqref{wda} yields $\wte=D(1_R)$. Thus being a unital \wtddiff algebra is the same as being a \ddiff algebra. For the same reason, being a unital modified differential Reynolds algebra of weight $\wte$ is the same as being a \ddiff Reynolds algebra. 
Nevertheless, we will retain the terms of the \ddiff algebra and the \ddiff Reynolds algebra since they are the main notions in this study.
\item 
It will be shown in Corollary~\mref{co:funrey} that the differential Reynolds algebra and modified integro-differential algebra both arise from separable Volterra integral operators. The focus of this paper will be on differential Reynolds algebras only. Modified integro-differential algebra is a variation of the integro-differential algebra~\mcite{GRR,RaR,RR} and will be treated in more detail separately. 
\end{enumerate}
\end{remark}

Each of the above classes of algebras also forms a category with the corresponding morphisms defined as follows.
\begin{enumerate}
\item
For \wtddiff algebras $(R_1,D_1,\wte_1)$ and $(R_2,D_2,\wte_2)$, an algebra homomorphism $\varphi:R_1\rar R_2$ is called a {\bf \wtddiff algebra homomorphism} if $\varphi(\wte_1)=\wte_2$ and $\varphi D_1=D_2 \varphi$. Note that if $R_1$ and $R_2$ are \ddiff algebras, then $\varphi(\lambda_1)=\lambda_2$ follows immediately from $\varphi D_1=D_2 \varphi$ since $\varphi(1_{R_1}) = 1_{R_2}$.
\item
For \wtd Reynolds algebras $(R_1,P_1,\wte_1)$ and $(R_2,P_2,\wte_2)$, an algebra homomorphism $\varphi:R_1\rar R_2$ is called a {\bf \wtd Reynolds algebra homomorphism} if $\varphi(\wte_1)=\wte_2$ and $\varphi P_1=P_2 \varphi$.
\item
For \ddiffrey algebras $(R_1,D_1,P_1)$ and $(R_2,D_2,P_2)$, a \ddiff algebra homomorphism $\varphi:R_1\rar R_2$ is called a  {\bf \ddiffrey algebra homomorphism} if $\varphi P_1=P_2 \varphi$.	The same applies to \wtdintdiff algebras. 
\end{enumerate}

\begin{exam}
The identity operator $\id_A$ on an algebra $A$ is a \ddiff operator, making any algebra a \ddiff algebra.
Also, any algebra $A$ has a natural \ddiffrey algebra structure $(A,\id_A,\id_A)$.
\end{exam}

\nc{\multop}{L}

For a fixed element $\lambda$ in an algebra $R$, let 
$$\multop_\lambda:R\to R, x\mapsto \lambda x,$$ 
denote the operator of left multiplication by $\lambda$. 
It is well known that post-composing $\multop_\lambda$ to a differential operator of weight zero is still such an operator. 
Likewise, pre-composing a Rota-Baxter operator of weight zero 
by $\multop_\lambda$ is easily checked to be again a Rota-Baxter operator of weight zero~\mcite{GGL}.
We now give the algebraic structures from composing $\multop_\lambda$ in the opposite directions. 

\begin{theorem} 
\begin{enumerate}
\item\label{t1}
Let $(R,d)$ be a differential algebra of weight $0$ with a central element $\wte$. 
Then $(R,d \multop_\lambda,d(\wte))$ is a \wtddiff algebra of weight $d(\wte)$.
\item\label{t2}
Let $(R,d,P)$ be a unital 
\intdiff algebra of weight $0$ with an invertible central element $\wte$.
Then $(R,d \multop_{\lambda^{-1}}, \multop_\lambda P)$ is a \wtdintdiff algebra, and also a \ddiffrey algebra.
\end{enumerate}
\mlabel{th:composing}
\end{theorem}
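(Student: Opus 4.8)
The plan is to handle the two parts in turn, Part~(\ref{t1}) being a direct application of the Leibniz rule. Set $D := d\multop_\lambda$, so $D(x) = d(\lambda x)$. Since $d$ is a derivation of weight $0$ and $\lambda$ is central, two applications of the Leibniz rule give
\[ D(xy) = d(\lambda xy) = d(\lambda)\,xy + \lambda\, d(x)\, y + \lambda\, x\, d(y). \]
Expanding the right-hand side of the \wtddiff identity~\eqref{wda} of weight $d(\lambda)$, namely $D(x)y + xD(y) - x\,d(\lambda)\,y$, via $d(\lambda x) = d(\lambda)x + \lambda d(x)$ and moving the central $\lambda$ past $x$, produces the same three terms, the term $x\,d(\lambda)\,y$ cancelling. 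Hence $(R, d\multop_\lambda, d(\lambda))$ is a \wtddiff algebra of weight $d(\lambda)$; as $D(1_R) = d(\lambda)$ it is a \ddiff algebra.

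For Part~(\ref{t2}), write $D := d\multop_{\lambda^{-1}}$ and $Q := \multop_\lambda P$. Because $\lambda^{-1}$ is again central, Part~(\ref{t1}) applied to $\lambda^{-1}$ shows at once that $(R, D, d(\lambda^{-1}))$ is a \ddiff algebra with $D(1_R) = d(\lambda^{-1})$, and $DQ = \id_R$ is immediate since $DQ(x) = d(\lambda^{-1}\lambda P(x)) = dP(x) = x$. The key preliminary observation, which I would record first, is that for a weight-$0$ integro-differential algebra the integro-differential identity is equivalent to the evaluation $E := \id_R - Pd$ being an algebra homomorphism, that $E$ is a projection, and that $EP = 0$ (because $PdP = P$). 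Consequently $E$ annihilates the image of $P$, and by multiplicativity it annihilates any product having a factor in $\im P$; equivalently $Pd(z) = z$ for every such $z$. These facts drive the remaining verifications.

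For the Reynolds identity~\eqref{Gra} for $Q$ of weight $d(\lambda^{-1})$, I would expand both sides and pull the central factors of $\lambda$ out of all products. The left-hand side becomes $\lambda^2 P(x)P(y)$, while the three right-hand terms combine, by linearity of $P$ and the relation $\lambda^2 d(\lambda^{-1}) = -d(\lambda)$ (from $d(\lambda^{-1}\lambda) = d(1_R) = 0$), into $\lambda P\big(\lambda P(x)y + \lambda x P(y) + d(\lambda)P(x)P(y)\big)$. The bracketed expression is exactly $d(\lambda P(x)P(y))$ by the Leibniz rule and $dP = \id_R$, so the right-hand side equals $\lambda\,Pd(\lambda P(x)P(y))$; since $E(\lambda P(x)P(y)) = E(\lambda)E(P(x))E(P(y)) = 0$, this collapses to $\lambda^2 P(x)P(y)$ and the two sides agree.

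Finally, for the \wtdintdiff identity~\eqref{wtdint} I would substitute $\lambda^{-1}x$ and $\lambda^{-1}y$ into the original integro-differential identity and multiply through by $\lambda^2$; this reproduces the terms $Q(D(x))Q(D(y))$, $Q(D(x))y$ and $xQ(D(y))$ verbatim and reduces the claim to the single identity $\lambda^2 Pd(\lambda^{-2}xy) = Q(D(xy)) + Q(D(1_R))\big(xy - Q(D(xy))\big)$. Rewriting every occurrence of $Pd$ as $\id_R - E$ and using multiplicativity of $E$, both sides evaluate to $xy - \lambda^2 E(\lambda^{-2}xy)$. Assembling the \ddiff structure on $D$, the relation $DQ = \id_R$, the Reynolds identity and the \wtdintdiff identity then exhibits $(R,D,Q)$ simultaneously as a \wtdintdiff algebra and as a \ddiffrey algebra. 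I expect the main obstacle to be the bookkeeping of this ``constant'' correction term: tracking the powers of $\lambda$ and repeatedly invoking that $E$ kills the image of $P$ is exactly what makes the cyclic-looking right-hand sides collapse back onto the left-hand sides.
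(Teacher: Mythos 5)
Your proof is correct, and it rests on the same key lemma as the paper's: for a weight-zero \intdiff algebra, the identity~\eqref{intdiff} is equivalent to the multiplicativity of the evaluation $e:=\id_R-Pd$ (the paper cites this from \cite[Theorem~2.5]{GRR}). Part~(i) is the paper's computation with the two sides expanded in a different order. Where you genuinely diverge is in Part~(ii). The paper first establishes the \wtdintdiff identity~\eqref{wtdint} by showing that the conjugated evaluation $E:=\id_R-\multop_\wte P\,d\multop_{\wte^{-1}}=\wte\,e\,\wte^{-1}$ satisfies the twisted multiplicativity $E(xy)=E(1_R)^{-1}E(x)E(y)$, and then \emph{deduces} the Reynolds identity~\eqref{Gra} formally from~\eqref{wtdint} by substituting $\multop_\wte P(x)$ and $\multop_\wte P(y)$ and cancelling the invertible factor $E(1_R)$. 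You instead verify the two identities independently: Reynolds directly, by collapsing the right-hand side to $\lambda\,Pd(\lambda P(x)P(y))$ and invoking $eP=0$ together with multiplicativity of $e$; and \eqref{wtdint} by rescaling the arguments of~\eqref{intdiff}, which is the paper's conjugation $E=\wte e\wte^{-1}$ in different clothing. Nothing is lost, but the paper's ordering isolates a reusable implication that your route does not: the \wtdintdiff identity together with invertibility of $1_R-PD(1_R)$ already forces the Reynolds identity. One small point you should make explicit in the direct Reynolds computation: rewriting $\multop_\wte P(x)\,d(\wte^{-1})\,\multop_\wte P(y)$ as $\wte^2 d(\wte^{-1})P(x)P(y)$ uses that $d(\wte^{-1})$ is central. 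This is not an extra hypothesis, but it deserves a line: for central $\wte$ in a unital algebra one has $d(\wte)a=d(\wte a)-\wte d(a)=d(a\wte)-d(a)\wte=a\,d(\wte)$ for all $a\in R$, so $d(\wte)$ (and likewise $d(\wte^{-1})$) is automatically central.
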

\begin{proof}
\eqref{t1} For any $x,y\in R$, we have
\begin{align*}
d\multop_\lambda(x)y+xd\multop_\lambda(y)-xd(\wte)y&=d(\wte x)y+xd(\wte y)-xd(\wte)y\\
&\stackrel{\eqref{eq:wtdif}}{=} d(\wte x )y+ x\big(d(\wte)y+\wte d(y)\big)-xd(\wte)y\\
&=d(x\wte)y+x\wte d(y)\\
&\stackrel{\eqref{eq:wtdif}}{=} d(x\wte y)=d(\wte xy)=d\multop_\lambda(xy).
\end{align*}
So $d\multop_\lambda$ is a \wtddiff operator of weight $d(\wte)$.

\eqref{t2} 
For clarity, we tentatively denote $D:=d\multop_{\lambda^{-1}}$ and $\Pi:=\multop_\lambda P$ for the proof. 
Since $D(1_R)=d(\wte^{-1})$, we see that $(R,D)$ is a \ddiff algebra by Item~\eqref{t1}.   
By~\cite[Theorem~2.5]{GRR}, the evaluation map $e\coloneqq \id_R-Pd$
is an algebra homomorphism. 
Correspondingly, we define
\begin{equation}\mlabel{eq:e}
	E:=\id_R-\Pi D=\wte e\wte^{-1}.
	\end{equation}
Then for any $x,y\in R$, we obtain
$$E(xy)=\wte e(\wte^{-1}xy)=\wte e(\wte)e(\wte^{-1}x)e(\wte^{-1}y)= E(1_R)^{-1}E(x)E(y),$$
since $E(1_R)^{-1}=\lambda^{-1}e(\lambda^{-1})^{-1}=\wte^{-1}e(\wte)$.This is just Eq.~\eqref{wtdint} thanks to Eq.~\meqref{eq:e}, showing that $(R,D,\Pi)$ is a \wtdintdiff algebra. 

On the other hand, substituting $\Pi(x)$ for $x$ and $\Pi(y)$ for $y$ in Eq.~\eqref{wtdint}, we get 
$$\Pi(x)\Pi(y)=\Pi(x)\Pi(y)+\Pi(x)\Pi(y)-\Pi(D(\Pi(x)\Pi(y)))-\Pi(D(1_R))\big(\Pi(x)\Pi(y)-\Pi(D(\Pi(x)\Pi(y)))\big).$$
Namely, $E(1_R)\Big(\Pi(x)\Pi(y)-\Pi\big(D(\Pi(x)\Pi(y)\big))\Big)=0$. Since $E(1_R)$ is invertible and $D$ is a \wtddiff operator, we have
$$\Pi(x)\Pi(y)=\Pi(D(\Pi(x)\Pi(y)))\stackrel{\eqref{Dda}}{=}\Pi(\Pi(x)y)+\Pi(x\Pi(y))-\Pi(\Pi(x)D(1_R)\Pi(y)).$$
Thus $(R,D,\Pi)$ is a \ddiffrey algebra.
\end{proof}

\subsection{Realizations from integral operators}
We show that all the above algebraic notions find natural realizations as Volterra integral operators.
We consider continuous functions $C(I)$ on an open interval $I$ in $\RR$ or an open square in $\RR^2$. In fact we usually take $I=\RR$ or $\RR^2$ for simplicity.

\begin{defn}
	Let $I\subseteq \RR$ be an open interval. Fix  $K(x,t) \in C(I^2)$.
	\begin{enumerate}
		\item
		A {\bf Volterra (integral) operator} is a linear operator $P_K\coloneqq P_{K,\,a}:C(I) \to C(I)$ defined by 
		\vsa
		\[
		P_{K,\,a}(f)(x) = \int_a^x K(x,t)f(t)\,dt,
		\vsa
		\]
		for some $a \in I$. Here $K$ is called the {\bf kernel}.
		\item
		A kernel $K(x,t)$ and the corresponding Volterra operator are called {\bf separable} if it can be decomposed as $K(x,t) = k(x)h(t)$ for some functions $k$ and $h$ in $C(I)$.
		\item
		A kernel $K(x,t)$ and the corresponding Volterra operator are called {\bf \shadow} if it is a function of only the variable $t$ of integration (i.e. the ``dummy" variable). It is the special case of a separable kernel $K(x,t)=k(x)h(t)$ when $k(x)$ is a constant.
		\item
		An integral equation is called {\bf separable} (resp. {\bf \shadow}) if all the integral operators in the equation are separable (resp. \shadow) and share the same lower limit.
	\end{enumerate}
	\mlabel{de:volop}
\end{defn}
More details about Volterra integral equations with separable kernels can be found in e.g. \cite{Ze}.

For the simple case when $K(x,t)$ is \shadow, as noted before Theorem~\mref{th:composing}, $P_K$ is still a Rota-Baxter operator of weight zero. 
On the other hand, when the kernel $K(x,t)$ is indeed a function of $x$ in addition to $t$, the Volterra integral operator $P_K$ is no longer a Rota-Baxter operator, as shown in the simple counterexample.

\begin{exam}	
	Let $K(x,t)=x$ and $f=g=1$, then the integral operator $P_K$ on $C(\RR)$ with lower limit $a=0$ gives
	$P_K(f)(x)\,P_K(g)(x)=x^4$ and
	$P_K(fP_K(g))(x)+P_K(P_K(f)g)(x)=
	\frac{2}{3}x^4.$
\end{exam}	

In general, a Volterra operator with separable kernel $K(x,t) = k(x)h(t)$ is a Rota-Baxter operator only in very special circumstances, as shown in Corollary~\mref{co:seprb}.
Nevertheless, the operator is a \wtd Reynolds operator.

\begin{theorem}	\mlabel{thm:sep}
	Let $a\in I$ and let $K(x,t) = k(x)h(t)$ with $k \in C^1(I)$ and $h \in C(I)$ both zero free, where $C^1(I)$ consists of all differentiable functions over $I$. Define
	\begin{equation}
		D_K:C^1(I)\rar C(I), \quad
		D_{K}(f)(x)\coloneqq \frac{1}{h(x)}\left(\frac{f(x)}{k(x)}\right)' =\frac{k(x)f'(x)-k'(x)f(x)}{h(x)k(x)^2}.
		\mlabel{eq:Ddo}
	\end{equation}
\begin{enumerate}
\item \mlabel{it:sep1}
The operator $D_K$ is a \ddiff operator$:$
\vsa
	\begin{equation}	
		\mlabel{eq:lr}
		D_{K}(fg) = D_{K}(f)g + fD_{K}(g) - D_{K}(1)fg, \quad f, g\in C^1(I).
	\end{equation}
\item
The Volterra integral operator \vsb
	\[P_K:C(I)\rar C^1(I), P_{K}(f)\coloneqq  \int_a^xk(x)h(t)f(t)dt\]
	is a \ddiffrey operator$:$
	\begin{equation}
		P_{K}(f)P_{K}(g) = P_{K}(fP_{K}(g)) + P_{K}(P_{K}(f)g) - P_{K}(D_{K}(1)P_{K}(f)P_{K}(g)), \quad  f, g\in C(I), 
		\mlabel{eq:srb}
	\end{equation}
and $D_KP_K=\id_{C(I)}$.
\mlabel{it:sep2}
\item \mlabel{it:sep3}The operators $D_K$ and $P_K$ are also subject to the modified integro-differential identity~\eqref{wtdint}, namely, for $f,g\in C^1(I)$, we have 
\begin{equation}
\begin{split}
&P_K(D_K(f)))P_K(D_K(g))\\
  &=P_K(D_K(f))g+fP_K(D_K(g))-P_K(D_K(fg))-P_K(D_K(1))(fg-P_K(D_K(fg))).
\end{split}
 \mlabel{eq:intmid}
\end{equation}
\end{enumerate}
\end{theorem}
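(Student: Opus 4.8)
The plan is to obtain all three parts simultaneously by reducing to Theorem~\ref{th:composing}\,\eqref{t2}. The one subtlety is that $P_K$ is assembled from the simple integral $J(f)(x):=\int_a^x f(t)\,dt$ using \emph{two} left multiplications, namely $P_K=\multop_k\circ J\circ\multop_h$, whereas Theorem~\ref{th:composing}\,\eqref{t2} composes an integro-differential structure with only a single left multiplication. The key idea is therefore to first absorb the inner multiplier $\multop_h$ into the integral and the companion multiplier $\multop_{1/h}$ into the derivative. Concretely, writing $\frac{d}{dx}$ for the usual derivative, I would record the factorizations
\[
D_K=\multop_{1/h}\circ\tfrac{d}{dx}\circ\multop_{1/k},\qquad P_K=\multop_k\circ J\circ\multop_h,
\]
and set $\hat d:=\multop_{1/h}\circ\frac{d}{dx}$ and $\hat J:=J\circ\multop_h$ (so $\hat J$ is the Volterra operator with phantom kernel $h$, hence again Rota-Baxter of weight $0$).

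Next I would show that $(C^1(I),\hat d,\hat J)$ is a unital integro-differential algebra of weight $0$. Indeed $\hat d(fg)=\frac1h(fg)'=\hat d(f)g+f\hat d(g)$, so $\hat d$ is a derivation of weight $0$; the Fundamental Theorem of Calculus gives $\hat d\hat J=\id$; and since $\hat J\hat d(f)(x)=\int_a^x f'(t)\,dt=f(x)-f(a)$, the map $\id-\hat J\hat d$ is the evaluation $f\mapsto f(a)$, an algebra homomorphism. The integro-differential identity of weight $0$ is then readily verified (equivalently, it follows from the multiplicativity of $\id-\hat J\hat d$ as in \cite[Theorem~2.5]{GRR}). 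Because $k$ is zero free it is a central invertible element, so Theorem~\ref{th:composing}\,\eqref{t2} applies with $\lambda=k$ and shows that $(C^1(I),\hat d\,\multop_{k^{-1}},\multop_k\hat J)$ is \emph{both} a modified integro-differential algebra \emph{and} a differential Reynolds algebra. A direct check gives $\hat d\,\multop_{k^{-1}}=D_K$ and $\multop_k\hat J=P_K$; the modified integro-differential conclusion is precisely Eq.~\eqref{eq:intmid} of part~\eqref{it:sep3}, while the differential Reynolds conclusion yields Eq.~\eqref{eq:srb} together with $D_KP_K=\id$, which is part~\eqref{it:sep2}. Finally part~\eqref{it:sep1} is subsumed, since a differential Reynolds algebra carries a modified differential operator by definition; alternatively it follows from Theorem~\ref{th:composing}\,\eqref{t1} applied to $\hat d$ with central element $1/k$, whose weight $\hat d(1/k)$ equals $D_K(1)$.

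I expect the main obstacle to be the verification that $(\hat d,\hat J)$ is a genuine integro-differential algebra of weight $0$, since this is exactly the step that legitimizes folding $h$ into the integral so that the single-multiplier Theorem~\ref{th:composing}\,\eqref{t2} can be invoked; there is also the minor analytic bookkeeping that $\frac{d}{dx}$ maps $C^1(I)$ into $C(I)$ rather than to itself, so the operator identities are to be read as equalities of functions wherever both sides are defined (or one restricts to $C^\infty(I)$). As an independent cross-check one can verify Eq.~\eqref{eq:srb} directly: expanding $P_K=\multop_k\circ J\circ\multop_h$, applying the weight-$0$ Rota-Baxter identity for $J$ to the product $J(hf)J(hg)$, and then using the integration-by-parts identity $kJ(w)=J(k'J(w))+J(kw)$ (itself a consequence of the product rule together with $J(w)(a)=0$) matches the two sides after the $k'$-terms cancel; the remaining identities follow in the same manner.
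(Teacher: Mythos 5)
Your proposal is correct and follows essentially the same route as the paper: the paper likewise introduces $d_h(f)=h^{-1}f'$ and $p_h(f)=\int_a^x h(t)f(t)\,dt$ (your $\hat d$ and $\hat J$), checks they form an integro-differential pair of weight $0$, and then obtains all three parts by writing $D_K=d_h\,\multop_{k^{-1}}$ and $P_K=\multop_k\,p_h$ and running the argument of Theorem~\ref{th:composing}. Your caveat about $C^1(I)$ not being closed under the derivative is the same point the paper addresses immediately afterwards in Corollary~\ref{co:funrey} by passing to $C^\infty(I)$.
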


\begin{proof}
Define two linear maps 
$$d_h:C^1(I)\rar C(I), \quad
		d_h(f)(x)\coloneqq h(x)^{-1}f'(x),$$
and 
$$p_h:C(I)\rar C^1(I), \quad
		p_h(f)(x)\coloneqq \int_a^xh(t)f(t)dt.$$
Then $d_h$ satisfies the classical Leibniz rule for derivations.
Moreover, we have 
$d_hp_h=\id_{C(I)}$ and 
$$p_h(d_h(f))=\int_a^xf'(t)dt=f(x)-f(a),\quad f\in C^1(I),$$
so the integro-differential identity \eqref{intdiff} holds for $d_h$ and $p_h$.

Furthermore, we have 
$$D_K(f(x))=d_h(k(x)^{-1}f(x)),\quad P_K(f(x))=k(x)p_h(f(x)).$$ 
Thus $D_K=d_h \multop_{k(x)^{-1}}$ and $P_K=\multop_{k(x)} p_h$ with the notations in Theorem~\mref{th:composing}. Then the conclusions follow from the same proof as the one for Theorem~\mref{th:composing}. 
\end{proof}

The algebra $C^1(I)$ in Theorem~\mref{thm:sep} is not closed under $D_K$. To obtain a \ddiffrey algebra and also a \wtdintdiff algebra, we restrict the domain and range. 

\begin{coro}
	Let $R\coloneqq C^\infty(I)$ and $C^\infty(I^2)$ be the algebra of infinitely differentiable functions on $I\subseteq \RR$ and $I^2\subseteq \RR^2$. Then for $K(x,t)=k(x)h(t)\in C^\infty(I^2)$ zero free, the tuple $(R,D_K,P_K)$ is a \ddiffrey algebra and a \wtdintdiff algebra. \mlabel{co:funrey}
\end{coro}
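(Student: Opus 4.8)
The plan is to read the corollary as a closure statement rather than a fresh computation, since Theorem~\mref{thm:sep} has already supplied all the analytic content. There it is shown that, as equalities of functions wherever both sides are defined, $D_K$ obeys the \wtddiff rule~\meqref{eq:lr}, $P_K$ obeys the \wtd Reynolds identity~\meqref{eq:srb}, the pair satisfies $D_KP_K=\id$, and together they satisfy the \wtdintdiff identity~\meqref{eq:intmid}. The only thing separating these function identities from a genuine algebraic structure is that a \ddiffrey algebra (Definition~\mref{de:rrb}) demands a single fixed algebra $R$ carrying both $D_K$ and $P_K$ as endomorphisms, whereas in Theorem~\mref{thm:sep} the operator $D_K$ lowers differentiability (it sends $C^1(I)$ into $C(I)$) while $P_K$ raises it. As remarked just before the corollary, $C^1(I)$ is not invariant under $D_K$; passing to $R=C^\infty(I)$ is precisely the repair.

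The first and main step is therefore to verify that $R=C^\infty(I)$ is stable under both operators under the hypothesis that $k,h\in C^\infty(I)$ are zero free. For $D_K$, the zero-free hypothesis gives $1/k,\,1/h\in C^\infty(I)$, so the formula $D_K(f)=h^{-1}(f/k)'$ from~\meqref{eq:Ddo} exhibits $D_K(f)$ as a product and derivative of smooth functions, hence $D_K(f)\in C^\infty(I)$ for every $f\in C^\infty(I)$; the point is that $C^\infty(I)$, unlike $C^1(I)$, is not diminished by a single differentiation. For $P_K$, writing $P_K(f)(x)=k(x)\int_a^x h(t)f(t)\,dt$ and invoking the Fundamental Theorem of Calculus, the antiderivative of the smooth function $hf$ is again smooth, and multiplying by $k\in C^\infty(I)$ keeps it smooth, so $P_K(f)\in C^\infty(I)$. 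In particular the weight $D_K(1_R)$, whose value is $-k'/(hk^2)$, lies in $R$, and the unit $1_R$ is the constant function, also in $R$.

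With closure established, the second step is purely formal: restricting the function identities of Theorem~\mref{thm:sep} to arguments drawn from $R$, every term appearing on either side of~\meqref{eq:lr}, \meqref{eq:srb}, $D_KP_K=\id$, and~\meqref{eq:intmid} is an element of $R$, so each becomes an identity internal to $R$. This is exactly the list of axioms in Definition~\mref{de:rrb} for the \ddiffrey algebra $(R,D_K,P_K)$ of weight $D_K(1_R)$, and simultaneously for the \wtdintdiff algebra $(R,D_K,P_K)$.

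I expect no serious obstacle: the entire substance is the closure verification of the preceding paragraph, and the only point worth flagging is conceptual rather than computational, namely that the restriction to $C^\infty(I)$ is forced by the failure of $C^1(I)$ to be invariant under $D_K$, while smoothness is simultaneously preserved by the integral operator through the Fundamental Theorem of Calculus.
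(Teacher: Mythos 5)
Your proposal is correct and matches the paper's (implicit) argument: the paper offers no separate proof of the corollary beyond the remark preceding it, which signals exactly your reasoning — restrict to $C^\infty(I)$, where the zero-free smooth kernel factors make both $D_K$ and $P_K$ endomorphisms, so the identities already established in Theorem~\mref{thm:sep} become internal to $R$. Your closure verification is the right (and only) substantive step.
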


We also give specific examples.
\begin{exam}
	\begin{enumerate}
\item For $K(x,t)=e^{-x+t}=e^t/e^x$, we have $D_K(1)=1$. So the operator $P_K:C(\RR)\to C^1(\RR)$ satisfies the original Reynolds identity in Eq.~\meqref{eq:rey2}~\mcite{Ro1,ZtGG}.
\mlabel{it:exp}
\item For $K(x,t)=\frac{1}{x^2+1}$, we have $D_K(1)=2x$. So the operator $P_K:C(\RR)\to C^1(\RR)$ satisfies the identity
\[P_K(f)P_K(g) = P_K(P_K(f)g)+P_K(fP_K(g))-P_K\left(2xP_K(f)P_K(g)\right).\]
\end{enumerate}
\mlabel{ex:sepex}
\end{exam}

Eq.~\meqref{eq:Ddo} specializes to
\begin{equation}
	D_K(1)=-\frac{k'(x)}{h(x)k(x)^2}=\dfrac{1}{h(x)}\left(\dfrac{1}{k(x)}\right)'.
	\mlabel{eq:done}
\end{equation}
Then we obtain the characterization for a Volterra operator to satisfy the Rota-Baxter operator.

\begin{coro}
With the same assumption as Theorem~\mref{thm:sep}, we have $D_K(1)=0$ if and only if $k(x)$ is a nonzero constant. Thus when $K$ is separable, the Volterra operator $P_K$ is a Rota-Baxter operator (of weight zero) if and only if $K$ is \shadow.
\mlabel{co:seprb}
\end{coro}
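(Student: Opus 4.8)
The plan is to treat the two equivalences separately: the first is read off directly from the explicit formula~\meqref{eq:done}, and the second is then deduced from it together with the weighted Reynolds identity of Theorem~\mref{thm:sep}. For the first, formula~\meqref{eq:done} gives $D_K(1)(x)=-k'(x)/\big(h(x)k(x)^2\big)$. Since $h$ and $k$ are zero free by hypothesis, the denominator $h(x)k(x)^2$ never vanishes on $I$, so $D_K(1)\equiv 0$ if and only if $k'\equiv 0$, that is, if and only if $k$ is constant; being zero free, this constant is nonzero.

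For the second equivalence, I would start from Theorem~\mref{thm:sep}, which asserts that $P_K$ satisfies the weighted Reynolds identity~\meqref{eq:srb} and that $D_KP_K=\id_{C(I)}$. Comparing~\meqref{eq:srb} with the Rota-Baxter identity of weight zero, the sole discrepancy is the term $P_K\big(D_K(1)P_K(f)P_K(g)\big)$; hence $P_K$ is a Rota-Baxter operator of weight zero precisely when $P_K\big(D_K(1)P_K(f)P_K(g)\big)=0$ for all $f,g\in C(I)$. The crucial simplification is that $P_K$ is injective, since the left inverse $D_K$ furnished by $D_KP_K=\id_{C(I)}$ forces $P_K$ to have trivial kernel. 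Therefore the Rota-Baxter condition is equivalent to the pointwise identity $D_K(1)\,P_K(f)\,P_K(g)=0$ holding for all $f,g\in C(I)$.

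It remains to show that this product condition forces $D_K(1)\equiv 0$. The forward implication is immediate, since $D_K(1)=0$ annihilates the defect term. For the converse I would argue by contradiction with a test-function argument: if $D_K(1)$ were not identically zero then, being continuous (as $k\in C^1(I)$ and $h,k$ are continuous and zero free), it would be nonzero on some open subinterval, within which one may pick a point $x_0\neq a$. Taking $f=g=h$ yields $P_K(h)(x_0)=k(x_0)\int_a^{x_0}h(t)^2\,dt$, which is nonzero because $k(x_0)\neq 0$, $x_0\neq a$, and $h^2>0$ on the interval of integration. Then $D_K(1)(x_0)\,P_K(h)(x_0)^2\neq 0$, contradicting the vanishing above; hence $D_K(1)\equiv 0$, and by the first part $k$ is a nonzero constant, so $K(x,t)=k\,h(t)$ is \shadow. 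I expect this last step to be the main obstacle: passing from the vanishing of a product of functions to the vanishing of $D_K(1)$ itself, which relies on combining the injectivity of $P_K$ with the zero-freeness of $h$ to guarantee a supply of nonvanishing values $P_K(f)(x_0)$, rather than on any purely formal manipulation.
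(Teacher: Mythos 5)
Your proof is correct and follows the same route the paper intends: the first equivalence is read directly off Eq.~\meqref{eq:done} (zero-freeness of $h$ and $k$ makes the denominator nonvanishing, so $D_K(1)\equiv 0$ iff $k'\equiv 0$), and the second reduces the Rota-Baxter condition to the vanishing of the defect term in Eq.~\meqref{eq:srb}. The paper states the corollary essentially without proof, so your careful justification of the converse direction --- using the injectivity of $P_K$ (from $D_KP_K=\id_{C(I)}$) together with the test function $f=g=h$, for which $P_K(h)(x_0)=k(x_0)\int_a^{x_0}h(t)^2\,dt\neq 0$ at any $x_0\neq a$, to force $D_K(1)\equiv 0$ --- supplies exactly the details the paper leaves implicit.
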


\subsection{Properties of the algebraic operators}
\mlabel{ss:algprop}
We now present more properties of the algebraic operators.

First a \wtddiff operator is a formal inverse of a \wtd Reynolds operator.
\begin{prop}
	Let $D$ be an invertible linear operator on an algebra $R$. Then $D$ is a \wtddiff operator if and only if its inverse $D^{-1}$ is a \wtd Reynolds operator.
	\mlabel{pp:diffrey}
\end{prop}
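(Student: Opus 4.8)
The plan is to exploit the invertibility of $D$ to move freely between arbitrary elements of $R$ and elements written as $D^{-1}(\cdot)$, and to observe that the \wtddiff identity \meqref{wda} and the \wtd Reynolds identity \meqref{Gra} are simply the images of one another under $D$ and $P:=D^{-1}$, with the \emph{same} fixed weight $\wte$ throughout.

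First I would prove the forward direction. Assuming $D$ is a \wtddiff operator of weight $\wte$, set $P:=D^{-1}$, so $DP=PD=\id_R$. For arbitrary $f,g\in R$, substitute $x=P(f)$ and $y=P(g)$ into \meqref{wda}; since $D(P(f))=f$ and $D(P(g))=g$, this gives $D(P(f)P(g))=fP(g)+P(f)g-P(f)\wte P(g)$. Applying $P=D^{-1}$ to both sides then yields precisely $P(f)P(g)=P(fP(g))+P(P(f)g)-P(P(f)\wte P(g))$, which is the \wtd Reynolds identity \meqref{Gra} for $P$ of weight $\wte$. The converse runs the same computation in reverse: assuming $P:=D^{-1}$ is a \wtd Reynolds operator of weight $\wte$, I would take arbitrary $u,v\in R$ and use the bijectivity of $D$ to write $u=P(x)$, $v=P(y)$ with $x=D(u)$, $y=D(v)$. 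Applying $D$ to \meqref{Gra} and using $DP=\id_R$ gives $D(uv)=D(u)v+uD(v)-u\wte v$, which is exactly \meqref{wda}, so $D$ is a \wtddiff operator of weight $\wte$.

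The computation in each direction is routine, so I do not expect a genuine obstacle; the only points requiring care are bookkeeping ones. The essential structural input—and the reason invertibility is indispensable—is that $D$ being a bijection makes every element of $R$ simultaneously expressible as $P(\cdot)$ and as $D(\cdot)$, which is exactly what makes both substitutions above legitimate for all inputs. One should also verify that the single fixed element $\wte$ appearing in \meqref{wda} is transported unchanged to the weight appearing in \meqref{Gra}, so that the asserted equivalence is between a \wtddiff operator and a \wtd Reynolds operator of \emph{identical} weight; this is automatic from the calculation since $\wte$ is never acted upon by $D$ or $P$ in either substitution.
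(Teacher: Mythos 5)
Your proposal is correct and follows essentially the same route as the paper: both directions proceed by substituting $x=D^{-1}(f)$, $y=D^{-1}(g)$ into the \wtddiff identity~\meqref{wda} (resp.\ applying $D$ to the \wtd Reynolds identity~\meqref{eq:wdiffinv}) and using $DD^{-1}=D^{-1}D=\id_R$, with the weight $\wte$ carried through unchanged. No substantive difference from the paper's argument.
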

\begin{proof}
	Let $D:R\to R$ be invertible. If $D$ is a \wtddiff operator on $R$, then we have
	\begin{align*}
		D(D^{-1}(x)D^{-1}(y))&\stackrel{\eqref{wda}}{=}D(D^{-1}(x))D^{-1}(y)+D^{-1}(x)D(D^{-1}(y))
		-D^{-1}(x)\wte D^{-1}(y)\\
		&=xD^{-1}(y)+D^{-1}(x)y-D^{-1}(x)\wte D^{-1}(y).
	\end{align*}
	So
	\begin{equation} D^{-1}(x)D^{-1}(y)=D^{-1}(xD^{-1}(y)+D^{-1}(x)y-D^{-1}(x)\wte D^{-1}(y)), \quad x,y\in R,
		\mlabel{eq:wdiffinv}
	\end{equation}
	and $D^{-1}$ is a \wtd Reynolds operator.
	
	Conversely, if $D^{-1}$ is a \wtd Reynolds operator, applying $D$ to Eq.~\meqref{eq:wdiffinv} and taking $u=D^{-1}(x), v=D^{-1}(y)$ show that $D$ is a \wtddiff operator on $R$.
\end{proof}

The term \wtddiff operator is justified by the relation with the usual differential operator in the following lemma, in analog to the relation between the modified Rota-Baxter operator and the Rota-Baxter operator of weight zero~(see \mcite{Gub}). The proof is direct as in ~\cite{PZGL} where $\lambda$ is assumed to be a scalar.

\begin{lemma}
Let $\lambda$ be in the center of $R$. A linear operator $D$ on an algebra $R$ is a \wtddiff operator of weight $\lambda$ if and only if the operator $D-\lambda \id_R$ is a differential operator $($of weight $0$$)$.
\mlabel{lem:twistdiff}
\end{lemma}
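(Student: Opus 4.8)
The plan is to prove both implications simultaneously by introducing the candidate derivation $d \coloneqq D - \lambda\id_R$, equivalently $D = d + \lambda\id_R$, so that $D(z) = d(z) + \lambda z$ for every $z \in R$. Since the assignment $D \mapsto D - \lambda\id_R$ is a bijection on the space of linear operators on $R$, it suffices to show that $D$ satisfies the \wtddiff identity \meqref{wda} if and only if $d$ satisfies the weight-zero Leibniz rule $d(xy) = d(x)y + xd(y)$, namely Eq.~\meqref{eq:wtdif} with $\lambda = 0$.

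First I would substitute $D(z) = d(z) + \lambda z$ into the two sides of \meqref{wda}. The left-hand side $D(xy)$ becomes $d(xy) + \lambda(xy)$, while expanding the right-hand side $D(x)y + xD(y) - x\lambda y$ produces $d(x)y + \lambda x y + x d(y) + x\lambda y - x\lambda y$. Here the two copies of $x\lambda y$ cancel, and the surviving weight term $\lambda x y$ is identified with $\lambda(xy)$, so the right-hand side equals $d(x)y + xd(y) + \lambda(xy)$. Equating the two expressions and cancelling the common summand $\lambda(xy)$ reduces \meqref{wda} to exactly $d(xy) = d(x)y + xd(y)$. Because every step is an algebraic identity and hence reversible, reading the computation backwards gives the converse: if $d$ is a derivation, then $D = d + \lambda\id_R$ satisfies \meqref{wda}.

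The argument is entirely routine, so I do not anticipate a genuine obstacle; the only point demanding attention—and the sole place the hypothesis is invoked—is the bookkeeping of the weight terms $\lambda x y$ and $x\lambda y$. The centrality of $\lambda$ ensures these terms may be freely reordered and matched across the two sides, mirroring the scalar-weight computation of \mcite{PZGL}; without it the surplus weight contributions would not combine cleanly into the single cancelling summand $\lambda(xy)$.
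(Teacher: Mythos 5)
Your proof is correct and is exactly the direct substitution the paper has in mind: the paper omits the verification entirely, saying only that ``the proof is direct as in \cite{PZGL}'', and your computation supplies precisely those details. One small remark: with $D=d+\lambda\,\id_R$ the identity \meqref{wda} reduces to the Leibniz rule by associativity alone, since $(\lambda x)y=\lambda(xy)$ and the terms $x(\lambda y)$ and $-x\lambda y$ cancel as written, so the centrality of $\lambda$ is not actually invoked in your calculation in the way your closing paragraph suggests.
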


\begin{remark}
	Applying Proposition~\mref{pp:diffrey} and Lemma~\mref{lem:twistdiff} to the classical case of weight $\lambda=1_R$, we recover a classical result in \mcite{Mil}: if $d$ is a derivation, then the formal inverse $P=(\id_R+d)^{-1}$ is a Reynolds operator. Conversely, if $P$ is an invertible Reynolds operator, then $d=P^{-1}-\id_R$ is a derivation.
\end{remark}

The following result shows that the operator identity~\meqref{Gra} for a \wtd Reynolds algebra $R$ is compatible with the associativity.

Let $(R,P,\wte)$ be a \wtd Reynolds algebra. Define a new multiplication
$\star$ on $R$ by
\begin{equation}
	x\star y\coloneqq P(x)y+xP(y)-P(x)\wte P(y)\quad \tforall x,y\in R.
	\mlabel{eq:derrey}
\end{equation}
Thus the Reynolds identity \meqref{Gra} is simply $P(x)P(y)=P(x\star y)$.

\begin{prop}
For a \wtd Reynolds algebra $(R,P,\wte)$, the pair $(R,\star)$ is an algebra.
If in addition, either $P(\wte)=\wte$ or
$\wte\in\bk$, then the tuple $(R,\star,P,\wte)$ also a \wtd Reynolds algebra.
\end{prop}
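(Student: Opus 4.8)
The plan is to exploit the reformulation of the Reynolds identity \meqref{Gra} recorded just above, namely $P(x)P(y)=P(x\star y)$ for all $x,y\in R$; equivalently, $P$ is multiplicative as a map $(R,\star)\to(R,\cdot)$. Bilinearity of $\star$ over $\bfk$ is immediate from the linearity of $P$ and the bilinearity of the ambient product, so the real content of the first assertion is associativity. To prove $(x\star y)\star z=x\star(y\star z)$, I would expand each side using the definition \meqref{eq:derrey} of $\star$, and each time a factor $P(u\star v)$ appears I would rewrite it as $P(u)P(v)$ via \meqref{Gra}.

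Carrying this out, $(x\star y)\star z$ equals $P(x\star y)z+(x\star y)P(z)-P(x\star y)\lambda P(z)$, and after substituting $P(x\star y)=P(x)P(y)$ and expanding $(x\star y)P(z)$ it collapses to the five-term sum $P(x)P(y)z+P(x)yP(z)+xP(y)P(z)-P(x)\lambda P(y)P(z)-P(x)P(y)\lambda P(z)$. An entirely parallel computation shows that $x\star(y\star z)$ equals the same five monomials in the original product. Thus associativity of $\star$ reduces to the associativity of $\cdot$, and no hypothesis on $\lambda$ is needed for this part; in particular the triple $\star$-products appearing below are unambiguous.

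For the second assertion I would verify the Reynolds identity for the candidate triple $(R,\star,P,\lambda)$, that is $P(x)\star P(y)=P(P(x)\star y)+P(x\star P(y))-P(P(x)\star\lambda\star P(y))$, where all products are now $\star$. Applying the multiplicativity of $P$ to collapse each $\star$ occurring inside a $P$ turns the right-hand side into $P(P(x))P(y)+P(x)P(P(y))-P(P(x))P(\lambda)P(P(y))$, while expanding the left-hand side by \meqref{eq:derrey} gives $P(P(x))P(y)+P(x)P(P(y))-P(P(x))\lambda P(P(y))$. The two sides agree term by term except in the weight term, which carries $P(\lambda)$ on the right and $\lambda$ on the left.

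It is exactly this weight term that the extra hypothesis controls, and it is the only delicate point. If $P(\lambda)=\lambda$, the two weight terms coincide outright. If instead $\lambda\in\bfk$, then the weight term $P(P(x)\star\lambda\star P(y))$ is read through scalar multiplication as $\lambda\,P(P(x)\star P(y))$, and since a scalar passes through $P$ and commutes with every element, both weight terms equal $\lambda\,P(P(x))P(P(y))$; hence the identity again holds. The main obstacle is conceptual rather than computational: recognizing that the self-referential (cyclic) shape of the Reynolds identity is tamed by viewing $P$ as a multiplicative map $(R,\star)\to(R,\cdot)$, after which both associativity and the transported Reynolds identity become routine term-by-term comparisons, the lone subtlety being the correct bookkeeping of $\lambda$ versus $P(\lambda)$ in the weight term.
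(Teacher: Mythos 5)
Your proposal is correct and follows essentially the same route as the paper: both arguments expand the two associations of $\star$ into the same five-term sum using $P(x\star y)=P(x)P(y)$, and both reduce the transported Reynolds identity to comparing the weight terms $P(\lambda)$ versus $\lambda$, which the extra hypothesis reconciles. Nothing is missing.
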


\begin{proof}
For $x,y,z\in R$, we have
\begin{align*}
(x\star y)\star z&=
P(x\star y)z+(x\star y)P(z)-P(x\star y)\wte P(z)\\
&\stackrel{\eqref{Gra}}{=}P(x)P(y)z+(P(x)y+xP(y)-P(x)\wte P(y))P(z)-P(x)P(y)\wte P(z)\\
&=P(x)P(y)z+P(x)yP(z)+xP(y)P(z)-P(x)\wte P(y)P(z)-P(x)P(y)\wte P(z),
\end{align*}
and
\begin{align*}
x\star(y\star z)&=P(x)(y\star z)+xP(y\star z)-P(x)\wte P(y\star z)\\
&\stackrel{\eqref{Gra}}{=}P(x)(P(y)z+yP(z)-P(y)\wte P(z))+xP(y)P(z)-P(x)\wte P(y)P(z)\\
&=P(x)P(y)z+P(x)yP(z)+xP(y)P(z)-P(x)\wte P(y)P(z)-P(x)P(y)\wte P(z),
\end{align*}
showing the associativity of $\star$.
	
We further have
	$$P(x)\star P(y)=P^2(x)P(y)+P(x)P^2(y)-P^2(x)\wte P^2(y),$$
	while
	$$P(P(x)\star y+x\star P(y)-P(x)\star \wte\star P(y))\stackrel{\eqref{Gra}}{=}P^2(x)P(y)+P(x)P^2(y)-P^2(x)P(\wte)P^2(y).$$
	Then we see that $(R,\star,P,\lambda)$ is a \wtd Reynolds algebra under either of the additional conditions stated in the proposition.
\end{proof}

\begin{remark}
\mlabel{rk:rey}
For a {\it commutative} \ddiffrey algebra $(R,D,P)$, Eq. \meqref{Dra} becomes
\begin{equation}
	P(x)P(y)=P(P(x)y)+P(xP(y))-P(D(1_R)P(x)P(y)),\quad x,y\in R.
	\mlabel{eq:Drac}
\end{equation}
The rewriting of $P(x)P(y)$ from this equality will not terminate since the left-hand side also appears on the right-hand side.
Assuming $D(1_R)=1_R$ for simplicity and repeatedly applying Eq.~\meqref{eq:Drac}, we formally have 
\begin{eqnarray*}
P(x)^2&=& 2P(xP(x))-P(P(x)^2) \\
&=& 2P(xP(x)) - 2P(P(xP(x)))+P^2(P(x)^2) \\
&=& \cdots \\
&=& 2\sum_{n=1}^\infty (-1)^{n-1} P^{n}(xP(x)).
\end{eqnarray*}
In particular, taking $P$ to be the Volterra operator $P_K(f)(u)=\int_0^uK(u,t) f(t)\,dt$ with $K(u,t)=e^{-u+t}$ as in Example~\mref{ex:sepex}, squaring the integral has the formal series expansion 
\begin{equation}
\mlabel{eq:reysq}
P_K(f)^2=2\sum_{n=1}^\infty (-1)^{n-1} P_K^{n}(fP_K(f))
\end{equation}
into iterated integrals. 

In principle, any integral expression with this kernel should also be a formal series of iterated integrals by the same idea. 
In order to make this idea rigorous and for such series to make sense, we need to work in a complete space.  
This leads to the general discussion in  Section~\mref{sec:freeobj}. Then this particular case will be revisited in Proposition~\mref{pp:powerex} and Example~\mref{ex:rey2}.
\vspace{-.2cm}
\end{remark}

\vspace{-.3cm}

\section{Free commutative \ddiffrey algebras}
\mlabel{sec:freeobj}

Due to the cyclic property of the Reynolds identity in Eqs.~\meqref{Gra} and \meqref{Dra}, the resulting rewriting leads to infinite iterations that require a completeness condition of the underlying space in order for the process to possibly converge. This especially applies to the construction of free \ddiffrey algebras. We first introduce a category with a suitable completeness condition, and then construct the free objects in this category.

\subsection{The categories of \wtd Reynolds algebras and \ddiffrey algebras}

Beginning with the category of $A$-modules, we successively build the category of operated $A$-modules, of complete operated $A$-modules with respect to a filtration, and of such complete operated $A$-modules in which the filtration is compatible with the operators. On top of the latter, we consider \wtd Reynolds algebras and \ddiffrey algebras that are compatible complete operated $A$-modules. They will serve as the category for the construction of the free \wtd Reynolds algebras and free \ddiffrey algebras.

\subsubsection{Compatible complete operated modules}

We first define compatible operated complete $A$-modules.

\begin{defn}
Fix a $\bfk$-algebra $A$.
\begin{enumerate}
\item A {\bf $\bfk$-linear operated $A$-module}, or simply an {\bf operated $A$-module}, is a pair $(R,P)$ with an $A$-module $R$ and a $\bfk$-linear operator $P$ on $R$. Note that the operator is only required to be $\bfk$-linear, not $A$-linear.
A {\bf homomorphism $f: (R,P)\to (R',P')$ of operated $A$-modules} is a homomorphism $f:R\to R'$ of $A$-modules such that $fP=P'f$.
\item
An $A$-module $R$ with a decreasing filtration $\{R_n\}_{n\geq 0}$ of $A$-submodules is called {\bf complete} if the natural $A$-linear homomorphism
$\kappa_R : R \to \varprojlim R/R_n$
is an isomorphism.
\item A {\bf complete operated $A$-module} is a triple $(R,R_n,P)$ where $(R,P)$ is an operated $A$-module and $\{R_n\}_{n\geq 0}$ is a decreasing filtration of operated $A$-submodules such that $(R,R_n)$ is a complete $A$-module.
\item
Let  $(R,R_n,P)$ be a complete operated $A$-module.
For $n\geq 0$, recursively define
\begin{equation}
	\overline{\Fil}^0_PR:=R, \quad \overline{\Fil}^n_PR:=\moverline{ A P(\overline{\Fil}^{n-1}_PR)}, \quad n\geq 1.
\mlabel{eq:complfilter}
\end{equation}
Here $\moverline{N}$ denotes the closure of an $A$-submodule $N$ in $R$ with respect to the topology given by the filtration $\{R_n\}_{n\geq 0}$.
\item
A complete operated $A$-module $(R,R_n,P)$ is said to have an {\bf operator generated filtration} if $R_n=\overline{\Fil}^n_P R,\ n\geq 0$. Then we call
$(R,R_n,P)$  a {\bf \compopgen}.
To be specific for later use, a \compopgen is a triple $(R,R_n,P)$ where
\begin{enumerate}
\item the pair $(R,P)$ is an operated $A$-module,
\item the pair $(R,R_n)$ is a complete $A$-module,
\item the above two structures are {\bf compatible} in the sense that $R_n=\overline{\Fil}^n_P R, n\geq 0$.
\end{enumerate}
\end{enumerate}
\mlabel{def:com}
\end{defn}

We give some notions on complete $A$-modules for later use.
Let $(R,R_n)$ be a filtered $A$-module. For $m>n\geq 0$, let
$$\kappa_{m,n}:R/ R_m \rar R/R_n , \quad \kappa_n:R\rar R/ R_n,$$
be the canonical projections. Then $\kappa_{m,n}\kappa_m=\kappa_n$.
Let
\begin{equation}
		\hat{R}\coloneqq \varprojlim R/R_n
	= \bigg\{(r_k+R_k)_{k\geq0}\in \prod_{k\geq0}R/R_k\,\bigg|\,r_{k+1}-r_k\in R_k,\ k\geq 0\bigg\}
	\mlabel{eq:inv_limit}
\end{equation}
be the inverse limit that gives the completion of $(R,R_n)$.
Then the natural $A$-module homomorphism $\kappa_R$ takes the form
\begin{equation}
	\kappa_R : R \to \hat{R},\ r\mapsto (r+R_k)_{k\geq 0},\quad  r\in R.
	\mlabel{eq:kr}
\end{equation}
Also let
$$\hat{\kappa}_n:\hat{R}\rar \prod_{k\geq 0} R/R_k\to R/R_n,\quad n\geq 0, $$
be the canonical projections. Then $\kappa_{m,n}\hat{\kappa}_m=\hat{\kappa}_n$ and $\hat{\kappa}_n\kappa_R=\kappa_n$.

\begin{lemma}
If $(R,R_n,P)$ is a \compopgen, then $P(R_n)\subseteq R_{n+1}$ for each $n\geq 0$.
\end{lemma}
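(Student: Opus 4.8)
The plan is to prove the inclusion by directly unwinding the recursive definition of the operator generated filtration in Eq.~\meqref{eq:complfilter} together with the compatibility hypothesis; once the definitions are substituted correctly, the statement is almost immediate.

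First I would fix $n \geq 0$. By the compatibility condition defining a \compopgen, we have $R_m = \overline{\Fil}^m_P R$ for every $m \geq 0$. Using this identity for $m = n$ and $m = n+1$ together with the recursion $\overline{\Fil}^{n+1}_P R = \moverline{A P(\overline{\Fil}^n_P R)}$, I obtain
\[
R_{n+1} = \overline{\Fil}^{n+1}_P R = \moverline{A P(\overline{\Fil}^n_P R)} = \moverline{A P(R_n)}.
\]

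Next, since the $A$-action is unital, every element $P(r)$ with $r \in R_n$ equals $1_A \cdot P(r)$ and hence lies in the $A$-submodule $A P(R_n)$; thus $P(R_n) \subseteq A P(R_n)$. As any subset is contained in its closure with respect to the topology given by the filtration $\{R_n\}_{n\geq 0}$, I conclude
\[
P(R_n) \subseteq A P(R_n) \subseteq \moverline{A P(R_n)} = R_{n+1},
\]
which is exactly the asserted inclusion.

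I do not expect any genuine obstacle here: the result is a formal consequence of the definitions. The only steps meriting care are the correct substitution of the compatibility identities $R_n = \overline{\Fil}^n_P R$ into the recursion that produces $R_{n+1} = \moverline{A P(R_n)}$, and the elementary observation that unitality of $A$ places the image $P(R_n)$ inside the $A$-submodule it generates, which in turn sits inside its own closure. No completeness or convergence considerations are needed for this particular statement.
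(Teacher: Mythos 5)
Your proof is correct and follows essentially the same route as the paper's: both substitute the compatibility identity $R_m=\overline{\Fil}^m_P R$ into the recursion $\overline{\Fil}^{n+1}_P R=\moverline{AP(\overline{\Fil}^n_P R)}$ and conclude $P(R_n)\subseteq R_{n+1}$. You merely make explicit the two small inclusions (unitality giving $P(R_n)\subseteq AP(R_n)$, and a set lying in its closure) that the paper's one-line proof leaves implicit.
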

\begin{proof}
By the definition of $\overline{\Fil}^{n+1}_P R$ in Eq.~\meqref{eq:complfilter}, we have $P(R_n)=P(\overline{\Fil}^n_P R)\subseteq \overline{\Fil}^{n+1}_P R=R_{n+1}$ for $n\geq 0$.
\end{proof}

\subsubsection{Compatible weighted Reynolds algebras and \ocmdras}
Adding a Reynolds algebra or a differential Reynolds algebra structure to a \compopgen gives us the main structures of interest.

\nc{\crac}{\mathcal{CR}_A}
\nc{\cdrac}{\mathcal{CDR}_A}

\begin{defn}
Fix a commutative pointed algebra $(A,\lambda)$. A {\bf \ocmra} over $(A,\lambda)$ is a quintuple $(R,R_n,P,\lambda_R,i_R)$ where
\begin{enumerate}
\item \mlabel{it:wr1}
$(R,\lambda_R)$ is an $(A,\lambda)$-algebra, in the sense that $R$ is an $A$-algebra of which the structure map $i_R:A\to R$ sends $\lambda$ to $\lambda_R$;
\item \mlabel{it:wr3}
with respect to the $A$-module structure on $R$ as an $A$-algebra, the triple $(R,R_n,P)$ is a \compopgen;
\item \mlabel{it:wr2}
$(R,P,\lambda_R)$ is a $\lambda_R$-\wtd Reynolds algebra.
\end{enumerate}
Let $\crac$ denote the category of \ocmras over $(A,\lambda)$ in which a morphism is defined to be a morphism for each of the three components \meqref{it:wr3}--\meqref{it:wr2}.
\end{defn}

\begin{defn}
Fix a commutative \ddiff algebra $(A,d)$. A {\bf \ocmdra} over $(A,d)$ is a quintuple $(R,R_n,D,P,i_R)$ where
	\begin{enumerate}
\item \mlabel{it:umd1}
$(R,D)$ is an $(A,d)$-\ddiff algebra, in the sense that $R$ is an $A$-algebra such that $i_R\,d=D\,i_R$ for the structure map $i_R:A\to R$. Note that $i_R$ is at the same time a homomorphism of pointed algebras $(A,d(1_A))\to (R,D(1_R))$;		
\item \mlabel{it:umd3}
with respect to the $A$-module structure on $R$ as an $A$-algebra, the triple $(R,R_n,P)$ is a \compopgen;
\item \mlabel{it:umd2}
$(R,D,P)$ is a \ddiffrey algebra.
\end{enumerate}
Let $\cdrac$ denote the category of \ocmdras over $(A,d)$ in which a morphism is defined to be a morphism for each of the three components \meqref{it:umd3}--\meqref{it:umd2}.
\end{defn}

\begin{exam}
	\mlabel{ex:comp}
By Corollary~\mref{co:funrey}, the triple $(C^\infty(\RR),D_K,P_K)$ with $K(x,t) = k(x)h(t)\in C^\infty(\RR^2)$  free of zeros, is a commutative \ddiffrey algebra.

Consider the following decreasing filtration of operated $\RR$-submodules of $(C^\infty(\RR),P_K)$,
		\begin{equation}
			C^\infty(\RR)_0=C^\infty(\RR),\quad C^\infty(\RR)_n=\{f\in C^\infty(\RR)\,|\,f^{(i)}(0)=0,\ 0\leq i\leq n-1\},\quad  n\geq 1.
			\mlabel{eq:contfil}
		\end{equation}
Note that, for the homomorphism $\kappa_R$ in Eq.~\meqref{eq:kr}, we have
		\[\ker\kappa_{C^\infty(\RR)}=\bigcap_{n\geq0} C^\infty(\RR)_n=\left\{f\in C^\infty(\RR)\,\left|\,f^{(n)}(0)=0,\,  n\geq0\right.\right\}\neq\{0\}.\]
		For example, the function
		$f(x)=\begin{cases}
			e^{-x^{-2}},&x\neq0,\\
			0,&x=0,
		\end{cases}$
		is in such an intersection. Hence,
$$\kappa_{C^\infty(\RR)}:C^\infty(\RR)\rar\varprojlim C^\infty(\RR)/C^\infty(\RR)_n$$
		is not injective and $(C^\infty(\RR),P_K)$ is not a complete operated $\RR$-module.
\end{exam}

We next give an example of a \ocmdra. It is applicable for the formal series in Eq.~\meqref{eq:reysq} of Remark~\mref{rk:rey} to converge. 
\begin{prop}
Let $\RR[[x]]$ be the formal power series algebra with its term-by-term differentiation and integration.
Let $K(x,t)=k(x)h(t)$ with $k(x)\in \RR[[x]]$ and $h(t)\in \RR[[t]]$ both invertible $($that is, with nonzero constant terms$)$. As in Theorem~\mref{thm:sep}, define
\begin{equation*}
D_K:\RR[[x]]\rar \RR[[x]], \quad
D_{K}(f)(x)\coloneqq \frac{1}{h(x)}\left(\frac{f(x)}{k(x)}\right)' =\frac{k(x)f'(x)-k'(x)f(x)}{h(x)k(x)^2}
	\mlabel{eq:Ddo1}
\end{equation*}
and
\[P_K:\RR[[x]] \rar \RR[[x]], P_{K}(f)\coloneqq  \int_0^xk(x)h(t)f(t)dt.\]
Let $A$ be an intermediate ring between $\RR$ and $\RR[[x]]$ that is $D_K$-invariant and denote $d:=(D_K)|_{A}$. Then with the filtration $\RR[[x]]_n:=x^n\RR[[x]], n\geq 0$ and the natural inclusion $i_{\RR[[x]]}:A\to \RR[[x]]$, the quintuple
$$(\RR[[x]],\RR[[x]]_n,D_K,P_K,i_{\RR[[x]]})$$
is a \ocmdra over the \ddiff algebra $(A,d)$.
\mlabel{pp:powerex}
\end{prop}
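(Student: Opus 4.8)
The plan is to verify, in turn, the three conditions defining a \ocmdra over $(A,d)$ for the quintuple $(\RR[[x]],\RR[[x]]_n,D_K,P_K,i_{\RR[[x]]})$, treating them in increasing order of difficulty. Two of the three are essentially structural. The \ddiffrey algebra structure on $(\RR[[x]],D_K,P_K)$ (condition~\meqref{it:umd2}) follows by rerunning the proof of Theorem~\mref{thm:sep} over $\RR[[x]]$ in place of $C^1(I)$. The auxiliary operators $d_h(f)=h^{-1}f'$ and $p_h(f)=\int_0^x h(t)f(t)\,dt$ make sense on $\RR[[x]]$ because $h$ is invertible, and the only analytic inputs used in that proof --- the Leibniz rule for $d_h$, the identity $d_hp_h=\id$, and the formal Fundamental Theorem of Calculus $p_h(d_h(f))=f-f(0)$ --- all hold for term-by-term differentiation and integration of formal power series. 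Hence $D_K=d_h\multop_{k^{-1}}$ and $P_K=\multop_k p_h$ satisfy Eqs.~\meqref{Dda} and \meqref{Dra}, so $(\RR[[x]],D_K,P_K)$ is a \ddiffrey algebra. The $(A,d)$-\ddiff structure (condition~\meqref{it:umd1}) is immediate: since $A$ is $D_K$-invariant with $d=(D_K)|_A$ and $i_{\RR[[x]]}$ is the inclusion of the subring $A\subseteq\RR[[x]]$, the relation $i_{\RR[[x]]}\,d=D_K\,i_{\RR[[x]]}$ holds by restriction, and $\RR[[x]]$ is an $A$-algebra.

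Next I would check that $(\RR[[x]],\RR[[x]]_n,P_K)$ is a \compopgen (condition~\meqref{it:umd3}). The operator $P_K$ is $\RR$-linear, i.e. $\bfk$-linear, so $(\RR[[x]],P_K)$ is an operated $A$-module. Each $\RR[[x]]_n=x^n\RR[[x]]$ is an ideal of $\RR[[x]]$, hence an $A$-submodule, and the completeness of the pair $(\RR[[x]],\RR[[x]]_n)$ is the standard identification $\RR[[x]]\cong\varprojlim\RR[[x]]/x^n\RR[[x]]$ for the $x$-adic filtration.

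The crux is the compatibility $\overline{\Fil}^n_{P_K}\RR[[x]]=\RR[[x]]_n$ for all $n\geq 0$, which I would prove by induction using Eq.~\meqref{eq:complfilter}. The base case $n=0$ is the definition. For the inductive step the key sub-claim is that $P_K$ maps $x^{n-1}\RR[[x]]$ onto $x^n\RR[[x]]$. Writing $P_K=\multop_k p_h$, multiplication by the unit $k$ is a bijection of $\RR[[x]]$ preserving $x$-adic valuation, while $p_h$ raises valuation by exactly one: since $h$ is a unit, $hf$ has the same valuation as $f$, and integrating a series of valuation $\geq n-1$ yields one of valuation $\geq n$ whose lowest surviving coefficient is nonzero because $\mrm{char}\,\bfk=0$. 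Moreover $p_h$ maps $x^{n-1}\RR[[x]]$ onto $x^n\RR[[x]]$, with inverse $d_h$ on this range via $p_h(d_h(F))=F-F(0)=F$. Granting this, $P_K(\overline{\Fil}^{n-1}_{P_K}\RR[[x]])=P_K(x^{n-1}\RR[[x]])=x^n\RR[[x]]$; since $1\in A$ and $x^n\RR[[x]]$ is an ideal of $\RR[[x]]$, we get $A\,P_K(x^{n-1}\RR[[x]])=x^n\RR[[x]]=\RR[[x]]_n$, which is closed in the $x$-adic topology, so taking closures gives $\overline{\Fil}^n_{P_K}\RR[[x]]=\RR[[x]]_n$ and completes the induction.

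I expect this last compatibility step to be the main obstacle, since it is the one place where the precise interaction between the Volterra operator $P_K$ and the chosen filtration must be pinned down. In particular, the exact (rather than merely $\geq$) shift of $x$-adic valuation under integration relies on the characteristic-zero hypothesis, and it is exactly this that guarantees the operator-generated filtration $\overline{\Fil}^n_{P_K}\RR[[x]]$ recovers the given filtration $\RR[[x]]_n$ rather than a coarser one.
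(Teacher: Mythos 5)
Your proposal is correct and follows essentially the same route as the paper: the \ddiffrey structure by rerunning Theorem~\mref{thm:sep} formally over $\RR[[x]]$, completeness as the standard $x$-adic inverse limit, and the compatibility $\overline{\Fil}^n_{P_K}\RR[[x]]=x^n\RR[[x]]$ by induction using the surjectivity of $P_K$ from $x^{n-1}\RR[[x]]$ onto $x^n\RR[[x]]$. Your phrasing via $p_h d_h=\id$ on $x^n\RR[[x]]$ and the factorization $P_K=\multop_k p_h$ is the same computation the paper packages as the explicit identity $P_K\big((f/k)'h^{-1}\big)=f-\tfrac{f(0)}{k(0)}k$, combined with the same closedness argument for $x^n\RR[[x]]$.
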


\begin{proof}
Since $A$ is $D_K$-invariant, the restriction $d$ of $D_K$ to $A$ defines a \ddiff operator
on $A$. Hence for the inclusion $i_{\RR[[x]]}:A\to \RR[[x]]$, we have $i_{\RR[[x]]} d=D_K i_{\RR[[x]]}$.
Thus $(\RR[[x]],D_K)$ is an $(A,d)$-\ddiff algebra. 
Also, Theorem~\mref{thm:sep} can be applied to elements of $\RR[[x]]$ to directly show that $(\RR[[x]],D_K,P_K)$ is a commutative \ddiffrey algebra.

It remains to check that $(\RR[[x]],\RR[[x]]_n,P_K)$ is a \compopgen.
Termwise integration gives
\begin{equation}
P_K(x^n \RR[[x]])\subseteq x^{n+1}\RR[[x]],\quad n\geq 0,
\mlabel{eq:termwiseint}
\end{equation}
and hence the filtration $\{\RR[[x]]_n\}_{n\geq0}$ is a decreasing filtration of operated $A$-submodules (actually ideals) of $(\RR[[x]],P_K)$.
Then the natural $A$-module isomorphism
		$$\kappa_{\RR[[x]]}:\RR[[x]] \rar\varprojlim \RR[[x]]/\RR[[x]]_n$$
shows that $(\RR[[x]],\RR[[x]]_n)$ is a complete operated $A$-module.

Finally for the compatibility between the operated module and the complete module structure, we will verify the equalities
\begin{equation}
\overline{\Fil}^n_{P_K} \RR[[x]]=x^n\RR[[x]]=\RR[[x]]_n,\quad  n\geq 0,
\mlabel{eq:opergen}
\end{equation}
by induction on $n$.

For the initial step, for any $f\in \RR[[x]]$, we have
\begin{equation}
P_K\bigg(\bigg(\frac{f(x)}{k(x)}\bigg)'\frac{1}{h(x)}\bigg)=k(x)\int_0^x \bigg(\frac{f(t)}{k(t)}\bigg)'dt=f(x)-\dfrac{f(0)}{k(0)}k(x).
\mlabel{eq:2ftc}
\end{equation}
From this and $x\RR[[x]]=\{f\in \RR[[x]]\,|\,f(0)=0\}$, we obtain that $x\RR[[x]]$ is contained in $P_K(\RR[[x]])$ and hence in $\overline{\Fil}^1_{P_K}\RR[[x]]$. For the opposite inclusion, since $P_K(\RR[[x]])\subseteq x\RR[[x]]$ from Eq.~\meqref{eq:termwiseint}, we have $\overline{\Fil}^1_{P_K}\RR[[x]]=\moverline{AP_K(\RR[[x]])}\subseteq x\RR[[x]]$ as $x\RR[[x]]$ is closed.

For the inductive step, suppose that Eq.~\meqref{eq:opergen} holds for a given $n\geq 1$. Then
$g\in x^{n+1}\RR[[x]]$ implies $(g/k)'h^{-1}\in x^n\RR[[x]]=\overline{\Fil}^n_{P_K} \RR[[x]]$.
Thus by Eq.~(\mref{eq:2ftc}), $g$ is in $P_K(\overline{\Fil}^n_{P_K}\RR[[x]])\subseteq \overline{\Fil}^{n+1}_{P_K} \RR[[x]]$.
Hence, we have the inclusion
$$x^{n+1}\RR[[x]]\subseteq \overline{\Fil}^{n+1}_{P_K} \RR[[x]].$$
For the opposite inclusion, the induction hypothesis and Eq.~\meqref{eq:termwiseint} imply that $$P_K(\overline{\Fil}^n_{P_K}\RR[[x]])=P_K(x^n\RR[[x]])\subseteq x^{n+1}\RR[[x]],$$
and hence $AP_K(\overline{\Fil}^n_{P_K}\RR[[x]])\subseteq  x^{n+1}\RR[[x]]$. Thus $\overline{\Fil}^{n+1}_{P_K} \RR[[x]]=\moverline{AP_K(\overline{\Fil}^n_{P_K}\RR[[x]])}\subseteq  x^{n+1}\RR[[x]]$,
since for the topology given by the filtration $\{x^n\RR[[x]]\}_{n\geq0}$, every $x^n\RR[[x]]$ is closed as the complement of the open set $\bigcup\limits_{0\neq f\in \RR[x],\deg f<n}(f+x^n\RR[[x]])$.
This completes the induction and thereby completes the proof.
\end{proof}

\subsubsection{The free objects} The above preparations give us the suitable category in which to construct the free objects. We give their definitions here and provide a construction in the next subsection.

\begin{defn} \mlabel{df:wra}
Fix a commutative pointed algebra $(A,\lambda)$. The {\bf free \ocmra} over $(A,\lambda)$ is defined to be the initial object in the category $\crac$ of \ocmras. More precisely, it is a \ocmra over $(A,d)$ $$(R(A,\lambda),R(A,\lambda)_n,P_{R(A,\lambda)},\free{\lambda},i_{R(A,\lambda)})$$
that satisfies the desired universal property that,
for every \ocmra $(R,R_n,P_R,\lambda_R,i_R)$ over $(A,\lambda)$ (with the structural pointed algebra homomorphism $(f=)i_R:(A,\lambda)\rar (R,\lambda_R)$),\footnote{In the usual description of the universal property of a free object, $i_R$ plays the role of $f$ from which a morphism $\bar{f}$ on the free object is induced.} there exists a unique homomorphism of \ocmras over $(A,d)$
$$\bar{f}:(R(A,\lambda),R(A,\lambda)_n,P_{R(A,\lambda)},\free{\lambda},i_{R(A,\lambda)})\to (R,R_n,P_R,\lambda_R,i_R).$$
In particular, as a homomorphism over $(A,\lambda)$, we have  $\bar{f}\,i_{R(A,\lambda)}=i_R$. 
\end{defn} 	

We similarly define the notion of the free \ocmdra over a \ddiff algebra. Recall that a \ddiff algebra $(R,D)$ is naturally a pointed algebra $(R,D(1_R))$.

\begin{defn} \mlabel{df:unimod}
Fix a \ddiff algebra $(A,d)$. The {\bf free \ocmdra} over $(A,d)$ is defined to be the initial object in the category $\cdrac$ of \ocmdras over $(A,d)$. More precisely, it is a \ocmdra over $(A,d)$
$$(R(A,d),R(A,d)_n,D_{R(A,d)},P_{R(A,d)},i_{R(A,d)})$$
with the universal property that,
for every \ocmdra $(R,R_n,D_R,P_R,i_R)$ over $(A,d)$ (with the structural \ddiff algebra homomorphism $i_R:(A,d)\rar (R,D_R)$), there exists a unique homomorphism of \ocmdras over $(A,d)$  $$\bar{f}:(R(A,d),R(A,d)_n,D_{R(A,d)},P_{R(A,d)},i_{R(A,d)})\to (R,R_n,D_R,P_R,i_R).$$
In particular, $\bar{f}\,i_{R(A,d)} = i_R$. 
\end{defn} 	

{\em As a shorthand notation, we will denote
	$j_A=i_{R(A,\lambda)}$ in Definitions~\mref{df:wra} and $j_A=i_{R(A,d)}$ in Definition~\mref{df:unimod} in the sequel. }

\subsection{The construction of free commutative \wtd Reynolds algebras and free \ddiffrey algebras}

In this subsection we construct the free objects in the category of commutative \ocmras, and then in the category of commutative \ocmdras.

\noindent
{\bf Notation. } For the rest of this section all algebras are assumed to be commutative.

Fix a base field $\bfk$ and a pointed $\bfk$-algebra $(A,\lambda)$ with $\lambda\in A$. Consider the infinite product
of vector spaces
\begin{equation}
	\shai(A)\coloneqq\prod_{k\geq 1} A^{\ot k}.
	\mlabel{pre}
\end{equation}
Here the tensor powers are taken over $\bk$.

Our goal is to equip $\shai(A)$ with the structure of a free \ocmdra. The statement is given in Theorem~\mref{thm:freedrey} which will be proved in the following steps.

\begin{enumerate}
\item [\bf Step 1.] For the given algebra $A$, equip $\shai(A)$ with the structure of a \compopgen (\S~\mref{sss:csp});
\item [\bf Step 2.] Fix $\lambda\in A$. Use the topology from Step 1 to equip the operated $A$-module $\shai(A)$ with a product $\shpr$, making it into a \wtd Reynolds algebra~(\S~\mref{sss:toa});
\item [\bf Step 3.] Show that the \ocmra\ $(\shai(A),\shai(A)_n,\shpr,P_{\sshai(A)})$, together with the natural inclusion $j_A:A\to \shai(A)$, has the desired universal property as a free \ocmra\ over $(A,\lambda)$~(\S~\mref{sss:fcwra});
\item [\bf Step 4.] Assume that $(A,d)$ is a \ddiff algebra. Further introduce a \ddiff operator $D_{\sshai(A)}$ on $\shai(A)$, and show that $$(\shai(A),\shai(A)_n,\shpr,D_{\sshai(A)},P_{\sshai(A)}, j_A)$$
    is a free \ocmdra\ over $(A,d)$~(\S~\mref{sss:fcdra}).
\end{enumerate}

\subsubsection{{\bf Step 1.} $\shai(A)$ as a \compopgen}
\mlabel{sss:csp}

Let $A$ be a unitary $\bfk$-algebra. We first equip the $\bfk$-module
\begin{equation}
\shai(A)\coloneqq \prod_{k\geq1} A^{\ot k}
\mlabel{eq:shsp}
\end{equation}
with an operated $A$-module structure.
Define the $A$-module action on $\shai(A)$ by multiplying on the left-most tensor factors: for $a\in A$ and a pure tensor $\fraka=a_1\ot a_2\ot \cdots \ot a_k, a_i\in A, 1\leq i\leq k, k\geq 1$, we define
$$ a \fraka := aa_1\ot a_2\ot \cdots \ot a_k.$$
Define a $\bk$-linear operator
\begin{equation}
P_{\sshai(A)}: \shai(A)\to \shai(A), \quad \fraka\mapsto 1_A\ot \fraka, \quad \fraka\in A^{\ot k},\, k\geq 1.
\mlabel{eq:rey-oper}
\end{equation}

Denote
$$\shai(A)_n\coloneqq \prod_{k\geq n+1} A^{\ot k},\quad n\geq0.$$
Then we get a decreasing filtration $\{\shai(A)_n\}_{n\geq0}$ of operated $A$-submodules of $\shai(A)$.

Note that $\shai(A)$ is naturally identified with the inverse limit of $A$-modules:
\[\shai(A)\cong \varprojlim\bigoplus_{i=1}^n A^{\ot i}\cong \varprojlim\,\shai(A)/\shai(A)_n.\]
So $(\shai(A),\shai(A)_n,P_{\sshai(A)})$ is a complete operated  $A$-module.
It can be regarded as the space of $\bk$-linear formal series from tensor powers of $A$, of the form $\sum_{k\geq 0} \fraka_k$ with $\fraka_k\in A^{\ot k}, k\geq 0$.

Also, denote
\[\sha^+(A)\coloneqq \bigoplus_{k\geq0}A^{\ot k}\text{ and }
\shai{^+}(A):=\bfk\oplus \prod_{k\geq 1} A^{\ot k},\]
again identified with the inverse limit
$$
\shai{^+}(A)\cong \varprojlim\bigoplus_{i=0}^k A^{\ot i} \cong \varprojlim\left(\bigoplus_{i\geq0} A^{\ot i}\bigg/\bigoplus_{i>k} A^{\ot i}\right),$$
with the convention that $A^{\otimes 0}\coloneqq \bk$.
Then as a complete vector space, we have the identification
\begin{equation}
	\shai(A)=A\,\hat{\ot}\, \shai{^+}(A)\cong \varprojlim\left(\Big(A\ot \bigoplus_{i\geq0} A^{\ot i}\Big)\bigg/\Big(A\ot \bigoplus_{i>k} A^{\ot i}\Big)\right).
\mlabel{eq:auga}
\end{equation}
Note that here $\hat{\ot}$ is the complete tensor product with respect to the inverse limit topology on $\shai{^+}(A)$. So a typical element of $A\hat{\ot}\, \shai{^+}(A)$ is of the form $\sum\limits_{k\geq 0} \sum\limits_i a_{k,i}\ot \fraka_{k,i}$, where $\sum\limits_i$ is a finite sum with $a_{k,i}\in A$ and $\fraka_{k,i}\in A^{\ot k}, k\geq 0$.

\begin{prop}
\mlabel{prop:ctoa}
The triple $(\shai(A),\shai(A)_n,P_{\sshai(A)})$ is a \compopgen. Namely, we have
\begin{equation}
\mlabel{eq:compfiln}
\shai(A)_n=\overline{\Fil}^n_{P_{\sshai(A)}}\shai(A),\quad n\geq0.
\end{equation}
\end{prop}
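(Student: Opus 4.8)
The structure of a complete operated $A$-module on $(\shai(A),\shai(A)_n,P_{\sshai(A)})$ is already in place from the preceding discussion, so the entire content of the proposition is the operator-generated condition \meqref{eq:compfiln}. The plan is to prove this single chain of equalities by induction on $n$. The base case $n=0$ is immediate, since both $\overline{\Fil}^0_{P_{\sshai(A)}}\shai(A)$ and $\shai(A)_0$ equal $\shai(A)$ by definition. For the inductive step I would assume $\overline{\Fil}^n_{P_{\sshai(A)}}\shai(A)=\shai(A)_n$ and unwind the recursion \meqref{eq:complfilter} to get
$$\overline{\Fil}^{n+1}_{P_{\sshai(A)}}\shai(A)=\moverline{AP_{\sshai(A)}(\shai(A)_n)},$$
reducing the task to identifying this closure with $\shai(A)_{n+1}=\prod_{k\geq n+2}A^{\ot k}$. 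I would then prove the two inclusions separately.

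For $\moverline{AP_{\sshai(A)}(\shai(A)_n)}\subseteq\shai(A)_{n+1}$, the key observation is that $P_{\sshai(A)}$ raises tensor degree by one: since $\shai(A)_n=\prod_{k\geq n+1}A^{\ot k}$, prepending $1_A$ carries it into $\prod_{k\geq n+2}A^{\ot k}=\shai(A)_{n+1}$. Because $\shai(A)_{n+1}$ is an $A$-submodule it contains the generated submodule $AP_{\sshai(A)}(\shai(A)_n)$, and because each filtration piece is closed in the filtration topology (its complement is a union of open cosets of the clopen subgroup $\shai(A)_{n+1}$), passing to closures preserves this inclusion.

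For the reverse inclusion $\shai(A)_{n+1}\subseteq\moverline{AP_{\sshai(A)}(\shai(A)_n)}$ I would first handle finitely supported elements. A pure tensor $a_1\ot\cdots\ot a_m$ with $m\geq n+2$ factors as $a_1\cdot P_{\sshai(A)}(a_2\ot\cdots\ot a_m)$, where $a_2\ot\cdots\ot a_m\in A^{\ot(m-1)}\subseteq\shai(A)_n$ since $m-1\geq n+1$; hence every such pure tensor, and thereby the full finite direct sum $\bigoplus_{m\geq n+2}A^{\ot m}$, already lies in $AP_{\sshai(A)}(\shai(A)_n)$. A general element of $\shai(A)_{n+1}$ is a formal series $\sum_{m\geq n+2}\fraka_m$ whose partial sums lie in this finite direct sum and converge to it in the inverse-limit topology, so the series lies in the closure. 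Combining the two inclusions closes the induction.

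The individual computations are routine; the one conceptual point requiring care — and the step I expect to be the genuine obstacle — is the interplay between algebraic generation by $AP_{\sshai(A)}$, which yields only finitely supported tensors, and the topological closure, which is precisely what recovers the infinite formal series populating $\shai(A)_{n+1}$. This is exactly why the closure $\moverline{\,\cdot\,}$ appears in \meqref{eq:complfilter}, and verifying that each $\shai(A)_{n+1}$ is itself closed (so that the closure adds nothing beyond $\shai(A)_{n+1}$) is the complementary half of the same observation.
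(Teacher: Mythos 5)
Your proposal is correct and follows essentially the same route as the paper's proof: both rely on $P_{\sshai(A)}$ raising tensor length by one together with the closedness of each $\shai(A)_m$ for the inclusion $\overline{\Fil}^{n+1}\subseteq\shai(A)_{n+1}$, and on factoring a pure tensor of length $\geq n+2$ as $a_1\cdot P_{\sshai(A)}(a_2\ot\cdots\ot a_m)$ so that finite truncations lie in $AP_{\sshai(A)}(\shai(A)_n)$ and general formal series lie in its closure. The only cosmetic difference is that you obtain the reverse inclusion directly as a limit of partial sums, whereas the paper argues by contradiction using the openness of the complement of $\overline{\Fil}^n_{P_{\sshai(A)}}\shai(A)$; the two arguments are interchangeable.
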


\begin{proof}
For clarity, in the proof we will use the abbreviations $\shai=\shai(A)$, $\shai_n=\shai(A)_n,\ n\geq 0,$ and $P=P_{\sshai(A)}$.

By the definition of $(\shai,P)$, we have
$P(\shai_n)\subseteq \shai_{n+1},\ n\geq0$. So $\{\shai_n\}_{n\geq0}$ is a decreasing filtration of operated $A$-submodules of $\shai$.

The inclusions $\overline{\Fil}^n_P \shai \subseteq \shai_n,\ n\geq 0,$ follow from starting with
$\overline{\Fil}^0_P \shai=\shai_0=\shai$ and then applying the induction on $n$ to yield
$$\overline{\Fil}^n_P \shai=\moverline{AP(\overline{\Fil}^{n-1}_P \shai)}\subseteq \moverline{AP(\shai_{n-1})}\subseteq\shai_n,\quad n\geq1.$$
Here the last inclusion follows from $AP(\shai_{n-1})\subseteq\shai_n$ and the fact that $\shai_n$ is closed since it is the complement of the open set $\bigcup\limits_{0\neq f\in \bigoplus_{i=1}^n A^{\ot i}}(f+\shai_n).$

We next prove the opposite inclusion $\shai_n \subseteq \overline{\Fil}^n_P \shai$ by contradiction. Suppose that there is $f\in \shai_n \backslash \overline{\Fil}^n_P \shai=\shai_n \cap \Big(\overline{\Fil}^n_P \shai\Big)^c$. Write $f=\sum\limits_{i= n+1}^\infty f_i$ with $f_i\in A^{\ot i}$.
Since $\Big(\overline{\Fil}^n_P \shai\Big)^c$ is open, there exists an open neighborhood $f+\shai_m$ of $f$ with $m>n$ that  is contained in $\Big(\overline{\Fil}^n_P \shai\Big)^c$. Therefore,
$$\sum_{i=n+1}^m f_i=f-\sum_{i=m+1}^\infty f_i\in f+\shai_m\subseteq (\overline{\Fil}^n_P \shai)^c.$$
However, for $\fraka=a_0\ot a_1\ot \cdots \ot a_{i-1}\in A^{\ot i}$ with $i\geq n+1$, we have
$$\fraka=a_0 P(a_1 P(\cdots P(a_{i-1})\cdots)\in \overline{\Fil}^n_P \shai.$$
Thus $\sum\limits_{i=n+1}^m f_i$ is in $\overline{\Fil}^n_P \shai$. This is a contradiction.
In summary, we have obtained that $\shai_n=\overline{\Fil}^n_P \shai,\ n\geq0$. This completes the proof.
\end{proof}

\subsubsection{{\bf Step 2.} $\shai(A)$ as a \wtd Reynolds algebra}
\mlabel{sss:toa}
For the algebraic product on $\shai(A)$, we will use a complete version of the shuffle product.
Recall the usual shuffle product $\ssha$ on the tensor space $T(A)$ (here denoted $\sha^+(A)$), recursively defined by
\begin{equation}\mlabel{eq:sh}
1_\bk\ssha1_\bk=1_\bk,\,\fraka\,\ssha 1_\bk=1_\bk\ssha\,\fraka=\fraka,\,\fraka\,\ssha\,\frakb=a_1\ot(\fraka'\,\ssha\,\frakb)+ b_1\ot(\fraka\,\ssha\,\frakb'),
\end{equation}
for $\fraka=a_1\ot\fraka'\in A^{\ot m}$,  $\frakb=b_1\ot\frakb'\in A^{\ot n}$ with $m,n>0$. To make the product explicit, denote
$$ \frakc\coloneqq c_1\ot \cdots \ot c_{m+n}\coloneqq \fraka\ot \frakb= a_1\ot \cdots \ot a_m \ot b_1\ot \cdots \ot b_n,
$$
and let

\[S_{m,n}\coloneqq \left\{\sigma\in S_{m+n}\,\big|\,\sigma^{-1}(1)<\cdots<\sigma^{-1}(m),\,\sigma^{-1}(m+1)<\cdots<\sigma^{-1}(m+n)\right\}\]
be the set of $(m,n)$-unshuffles. Then $c_{\sigma(1)}\ot \cdots \ot c_{\sigma(m+n)}$ are called the {\bf shuffles} of $\fraka$ and $\frakb$ and the shuffle product of $\fraka$ and $\frakb$ is

\begin{equation}
\fraka \,\ssha\, \frakb=\sum_{\sigma\in S_{m,n}} c_{\sigma(1)}\ot \cdots \ot c_{\sigma(m+n)}.
\mlabel{eq:shuf}
\end{equation}

We give another description of the shuffle product for later applications. Partition $S_{m,n}$ into the disjoint subsets
$$ S'\coloneqq\{\sigma\in S_{m,n}\,|\,\sigma^{-1}(m)=m+n\}, \quad
S''\coloneqq\{\sigma\in S_{m,n}\,|\, \sigma^{-1}(m+n)=m+n\}. $$
So $S'$ (resp. $S''$) consists of unshuffles $\sigma$ for which $c_{\sigma(m+n)}=a_m$ (resp. $c_{\sigma(m+n)}=b_n$) in $c_{\sigma(1)}\ot \cdots \ot c_{\sigma(m+n)}$. For $\sigma\in S'$, let $k=\sigma^{-1}(m+n)$, that is, $c_{\sigma(k)}=b_n$. Likewise, for $\sigma\in S''$, let $\ell=\sigma^{-1}(m)$, so that $c_{\sigma(\ell)}=a_m$.
Then the shuffle product in Eq.~\meqref{eq:shuf} can be rewritten as
\begin{equation}
\fraka \,\ssha\, \frakb=\sum_{\sigma\in S'}
c_{\sigma(1)}\ot \cdots \ot \underbrace{c_{\sigma(k)}}_{=b_n}\ot a_{k-n+1}\ot \cdots \ot a_m
+\sum_{\sigma\in S''}
c_{\sigma(1)}\ot \cdots \ot \underbrace{c_{\sigma(\ell)}}_{=a_m}\ot b_{\ell-m+1}\ot \cdots \ot b_n.
\mlabel{eq:shuf2}
\end{equation}

Now fix a pointed algebra $(A,\lambda)$.
With the above notations, we define a binary operation, called the  {\bf complete shuffle product}, 
 $$\hat{\ssha}:\shai{^+}(A)\,\underline{\ot}\,\shai{^+}(A)\rar\shai{^+}(A).
$$
Here the notion of underlined tensor $\underline{\ot}$ is used to distinguish it from the tensor symbol in $\shai{^+}(A)$.
For $\fraka=a_1\ot\cdots\ot a_m\in A^{\ot m}$ and $\frakb=b_1\ot\cdots\ot b_n\in A^{\ot n}$ with $m,n\geq 1$, define
\begin{align}
&
1_\bk\hat{\ssha}1_\bk\coloneqq 1_\bk,\quad\fraka\,\hat{\ssha}1_\bk=1_\bk\hat{\ssha}\,\fraka\coloneqq \fraka,\mlabel{eq:astexp1} \\
&\fraka\,\hat{\ssha}\,\frakb\coloneqq \sum_{\sigma\in S_{m,n}}\sum_{\fraki\in I_\sigma}(-\lambda)^{\ot i_1}\ot c_{\sigma(1)}\ot (-\lambda)^{\ot i_2}\ot c_{\sigma(2)}\ot \cdots\ot (-\lambda)^{\ot i_{m+n}} \ot c_{\sigma(m+n)}, \quad m, n>0,
\mlabel{eq:astexp2}
\end{align}
where, for $\sigma\in S_{m,n}$, the set $I_\sigma$ consists of all tuples $\fraki=(i_1,\dots,i_{m+n})\in\mathbb N^{m+n}$ such that $i_j=0$ whenever $j>\min\{\sigma^{-1}(m), \sigma^{-1}(m+n)\}$.

Alternatively, for a shuffle $c_{\sigma(1)}\ot \cdots \ot c_{\sigma(m+n)}$ of tensors $\fraka$ and $\frakb$ with $m, n\geq 1$, an {\bf extension} of this shuffle is obtained by inserting an arbitrary tensor power $(-\lambda)^{\ot j}$ in front of each of the tensor factors $c_{\sigma(j)}$ until $c_{\sigma(j)}$ is either $a_m$ or $b_n$, after which no insertion is allowed. Then $\fraka \,\hat{\ssha}\, \frakb$ is the sum of all the extensions of all the shuffles.
More precisely, applying the notions in Eq.~\meqref{eq:shuf2},
 Eq.~\meqref{eq:astexp2} can be rewritten as
{\small
\begin{equation}
\begin{split}
\fraka\,\hat{\ssha}\,\frakb=&
\sum_{\sigma\in S'}\sum_{i_j\geq 0,\ 1\leq j\leq k}(-\lambda)^{\ot i_1}\ot c_{\sigma(1)}\ot (-\lambda)^{\ot i_2}\ot c_{\sigma(2)}\ot \cdots\ot (-\lambda)^{\ot i_k} \ot c_{\sigma(k)}\ot a_{k-n+1}\ot \cdots \ot a_m\\
&+\sum_{\sigma\in S''}\sum_{i_j\geq 0,\ 1\leq j\leq \ell}(-\lambda)^{\ot i_1}\ot c_{\sigma(1)}\ot (-\lambda)^{\ot i_2}\ot c_{\sigma(2)}\ot \cdots\ot (-\lambda)^{\ot i_\ell} \ot c_{\sigma(\ell)}\ot b_{\ell-m+1}\ot \cdots \ot b_n.
\end{split}
\end{equation}
}

By Eq.~\meqref{eq:astexp2}, for each $r\geq 1$, the number of pure tensors in $\fraka\,\hat{\ssha}\,\frakb$ of length $r$ is $0$ for $r<m+n$. For $r\geq m+n$, this number is no more than the number of tuples $(i_1,\ldots,i_{m+n})\in \NN^{m+n}$ with $i_1+\cdots+i_{m+n}=r-m-n$, that is, the number of weak compositions of length $m+n$ and weight (sum) $r-m-n$. Hence this number is finite.
Thus $\fraka\,\hat{\ssha}\,\frakb$ is a well-defined element of $\prod_{r\geq m+n} A^{\ot r}$.
Then we can linearly extend $\hat{\ssha}$ to the general case when $\fraka$ and $\frakb$ are formal series of pure tensors and obtain a well-defined binary operation $\hat{\ssha}$ on $\shai{^+}(A)$.

As illustrations of Eq.~\eqref{eq:astexp2}, we give some examples for small values of $m$ and $n$.
When $m=n=1$, then from $a\ssha b= a\ot b+b\ot a$, we have
\[a\,\hat{\ssha}\,b
=\sum_{k\geq0} (-\lambda)^{\ot  k}\otimes(a\ot b+b\ot a), \quad a, b\in A.
\]  	
When $m=n=2$, then $\fraka=a_1\ot a_2,\,\frakb=b_1\ot b_2\in A^{\ot 2}$. Then from the shuffle product
\begin{align*}
\fraka\ssha \frakb=&
a_1\ot a_2\ot b_1\ot b_2+b_1\ot b_2\ot a_1\ot a_2
+a_1\ot b_1\ot a_2\ot b_2\\
&+a_1\ot b_1\ot b_2\ot a_2
+b_1\ot a_1\ot a_2\ot b_2+b_1\ot a_1\ot b_2\ot a_2,
\end{align*}j
we obtain

\begin{align*}
\fraka\,\hat{\ssha}\,\frakb
&=\sum_{i_1,i_2\geq0}(-\lambda)^{\ot i_1}\ot a_1 \ot (-\lambda)^{\ot i_2}\ot a_2\ot b_1\ot b_2\\
&\qquad
 +(-\lambda)^{\ot i_1}\ot b_1 \ot (-\lambda)^{\ot i_2}\ot b_2\ot a_1\ot a_2\\
&+\sum_{i_1,i_2,i_3\geq0}(-\lambda)^{\otimes i_1}\ot a_1 \ot (-\lambda)^{\otimes i_2}\ot b_1 \ot (-\lambda)^{\otimes i_3}\ot a_2\ot b_2\\
&\qquad
+(-\lambda)^{\otimes i_1}\ot a_1 \ot (-\lambda)^{\otimes i_2}\ot b_1 \ot (-\lambda)^{\otimes i_3}\ot b_2\ot a_2\\
&\qquad
+(-\lambda)^{\otimes i_1}\ot b_1 \ot (-\lambda)^{\otimes i_2}\ot a_1 \ot (-\lambda)^{\otimes i_3}\ot a_2\ot b_2\\
&\qquad
+(-\lambda)^{\otimes i_1}\ot b_1 \ot (-\lambda)^{\otimes i_2}\ot a_1 \ot (-\lambda)^{\otimes i_3}\ot b_2\ot a_2.
\end{align*}

We now give a recursive characterization of $\hat{\ssha}$.
\begin{prop}
The binary operation
$\hat{\ssha}$ on $\shai{^+}(A)$ defined in Eqs.~\eqref{eq:astexp1} and \eqref{eq:astexp2}
has the following recursion. For $\fraka=a_1\ot\fraka'\in A^{\ot m}$ and $\frakb=b_1\ot\frakb'\in A^{\ot n},\,m,n\geq 1$, with the convention that $\fraka'=1_\bk$ $($resp. $\frakb'=1_\bk$$)$ when $m=1$ $($resp. $n=1$$)$, we have
\begin{equation}
\mlabel{eq:ast2}
\fraka\,\hat{\ssha}\,\frakb=a_1\ot(\fraka'\,\hat{\ssha}\,\frakb)+ b_1\ot(\fraka\,\hat{\ssha}\,\frakb')-\lambda\ot(\fraka\,\hat{\ssha}\,\frakb).
\end{equation}
\end{prop}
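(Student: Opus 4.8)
The plan is to verify directly that the closed formula in Eq.~\eqref{eq:astexp2} satisfies the recursion, by splitting the defining sum according to its leftmost tensor factor. In a summand $(-\lambda)^{\ot i_1}\ot c_{\sigma(1)}\ot\cdots\ot(-\lambda)^{\ot i_{m+n}}\ot c_{\sigma(m+n)}$ of $\fraka\,\hat{\ssha}\,\frakb$, the first factor is $-\lambda$ precisely when $i_1\geq1$, and is $c_{\sigma(1)}$ when $i_1=0$. Because $\sigma\in S_{m,n}$ preserves the orders of the $a$'s and of the $b$'s, the letter $c_{\sigma(1)}$ sitting in the first position is forced to be $a_1$ (when $\sigma(1)=1$) or $b_1$ (when $\sigma(1)=m+1$). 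Hence the summands split into three disjoint classes, and I would show that their partial sums equal, respectively, the three terms $-\lambda\ot(\fraka\,\hat{\ssha}\,\frakb)$, $a_1\ot(\fraka'\,\hat{\ssha}\,\frakb)$ and $b_1\ot(\fraka\,\hat{\ssha}\,\frakb')$ appearing on the right side of Eq.~\eqref{eq:ast2}.

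For the class $i_1\geq1$, I would pull out one factor $-\lambda$ from the leading slot and reindex by $i_1\mapsto i_1-1$. The constraint defining $I_\sigma$ forces $i_j=0$ only for $j>\min\{\sigma^{-1}(m),\sigma^{-1}(m+n)\}$ and imposes no upper bound on $i_1$ (indeed $1\leq\min\{\sigma^{-1}(m),\sigma^{-1}(m+n)\}$ always holds), so for each fixed $\sigma$ this reindexing is a bijection from $\{\fraki\in I_\sigma:i_1\geq1\}$ onto $I_\sigma$. Summing over $\sigma$ therefore recovers exactly $-\lambda\ot(\fraka\,\hat{\ssha}\,\frakb)$.

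For the two remaining classes I would use the extension description following Eq.~\eqref{eq:astexp2}, in which a summand is an extension of a shuffle of $\fraka$ and $\frakb$ obtained by prefixing powers of $-\lambda$ before each factor up to and including the first occurrence of $a_m$ or $b_n$. When $i_1=0$ and $\sigma(1)=1$, deleting the leading $a_1$ leaves an extension of a shuffle of $\fraka'=a_2\ot\cdots\ot a_m$ with $\frakb$, the terminal letters $a_m$ and $b_n$ being unchanged. Formally, restricting $\sigma$ to the positions $2,\dots,m+n$ and relabeling gives a bijection $\{\sigma\in S_{m,n}:\sigma(1)=1\}\to S_{m-1,n}$, and under the shift $i_j\mapsto i_{j+1}$ the inequality cutting out $I_\sigma$ turns into the one cutting out the corresponding index set for $\fraka'\,\hat{\ssha}\,\frakb$, since $\min\{\sigma^{-1}(m),\sigma^{-1}(m+n)\}$ decreases by exactly $1$. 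This class therefore sums to $a_1\ot(\fraka'\,\hat{\ssha}\,\frakb)$; the mirror-image argument with $\sigma(1)=m+1$ and the bijection onto $S_{m,n-1}$ gives $b_1\ot(\fraka\,\hat{\ssha}\,\frakb')$. Adding the three partial sums then yields Eq.~\eqref{eq:ast2}.

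The step I expect to require the most care is the bookkeeping of the boundary cases $m=1$ and $n=1$, where $\fraka'$ or $\frakb'$ collapses to $1_\bk$ and the terminal letter $a_m=a_1$ (resp. $b_n=b_1$) already occupies the first position. For instance, when $m=1$ and $\sigma(1)=1$ one has $\min\{\sigma^{-1}(m),\sigma^{-1}(m+n)\}=1$, which kills all insertions and leaves the single term $a_1\ot b_1\ot\cdots\ot b_n=a_1\ot\frakb$, matching $a_1\ot(1_\bk\,\hat{\ssha}\,\frakb)$ by the unit convention in Eq.~\eqref{eq:astexp1}; I would confirm separately that the bijections and the index shift remain valid in these degenerate instances. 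Finally, all rearrangements above are legitimate because they respect the tensor length: every summand has a well-defined degree, and the finiteness count following Eq.~\eqref{eq:astexp2} shows each graded component involves only finitely many terms, so the manipulations are carried out degree by degree within the complete space $\shai{^+}(A)$.
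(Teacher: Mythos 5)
Your proof is correct and follows essentially the same route as the paper's: the paper likewise splits the sum in Eq.~\eqref{eq:astexp2} into the part with $i_1>0$ (yielding $-\lambda\ot(\fraka\,\hat{\ssha}\,\frakb)$ after pulling out one $-\lambda$ and reindexing) and the part with $i_1=0$, which it subdivides by whether $\sigma(1)=1$ or $\sigma(1)=m+1$ and matches to $a_1\ot(\fraka'\,\hat{\ssha}\,\frakb)$ and $b_1\ot(\fraka\,\hat{\ssha}\,\frakb')$ via the same unshuffle bijections and index shift. Your explicit checks of the boundary cases $m=1$ or $n=1$ and of the degree-by-degree finiteness are consistent with (and slightly more detailed than) the paper's treatment, which verifies $m=n=1$ directly and leaves the mixed case to the reader.
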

Note that, even though the left-hand side $\fraka\,\hat{\ssha}\,\frakb$ also appears on the right, the latter  has a higher tensor order, giving rise to a recursion in the complete space.
\begin{proof}
When $m=n=1$, we have $\fraka=a$ and $\frakb=b$ with $a, b\in A,$ and the recursion in \eqref{eq:ast2} is verified by 
\begin{align*}
a\,\hat{\ssha}\,b&=\sum_{k\geq0} (-\lambda)^{\ot  k}\otimes(a\ot b+b\ot a)\\
&=a\ot b+b\ot a+(-\lambda)\ot\Big(\sum_{k\geq0} (-\lambda)^{\ot  k}\otimes(a\ot b+b\ot a)\Big)\\
&=a\ot (1_\bk\hat{\ssha}\,b)+b\ot (a\,\hat{\ssha}1_\bk)-\lambda\ot(a\,\hat{\ssha}\,b).
\end{align*}  	
One can similarly check the case when exactly one of $m$ or $n$ is 1.
Now for $\fraka=a_1\ot\fraka'\in A^{\ot m}$ and $\frakb=b_1\ot\frakb'\in A^{\ot n}$ with $m,n>1$, we first divide the right-hand side of \eqref{eq:astexp2} into two summands:

$$\fraka\,\hat{\ssha}\,\frakb=T_1+T_2,$$
where
\[\begin{array}{l}
T_1:=\sum\limits_{\sigma\in S_{m,n}}\sum\limits_{\fraki\in I_\sigma\atop i_1=0}(-\lambda)^{\ot i_1}\ot c_{\sigma(1)}\ot \cdots\ot (-\lambda)^{\ot i_{m+n}} \ot c_{\sigma(m+n)},\\
T_2:=\sum\limits_{\sigma\in S_{m,n}}\sum\limits_{\fraki\in I_\sigma\atop i_1>0}(-\lambda)^{\ot i_1}\ot c_{\sigma(1)}\ot \cdots\ot (-\lambda)^{\ot i_{m+n}} \ot c_{\sigma(m+n)}.
\end{array}\]
Writing $\fraka'\ot\frakb=d_1\ot\dots\ot d_{m+n-1}$ and $\fraka\ot\frakb'=e_1\ot\dots\ot e_{m+n-1}$, for the first summand $T_1$, we have
\begin{align*}
T_1=&
\sum_{\sigma\in S_{m,n}\atop \sigma(1)=1}\sum_{\fraki\in I_\sigma\atop i_1=0}(-\lambda)^{\ot i_1}\ot c_{\sigma(1)}\ot \cdots\ot (-\lambda)^{\ot i_{m+n}} \ot c_{\sigma(m+n)}\\
&+\sum_{\sigma\in S_{m,n}\atop \sigma(1)=m+1}\sum_{\fraki\in I_\sigma\atop i_1=0}(-\lambda)^{\ot i_1}\ot c_{\sigma(1)}\ot \cdots\ot (-\lambda)^{\ot i_{m+n}} \ot c_{\sigma(m+n)}\\
\stackrel{\eqref{eq:sh}}{=}&a_1\ot\Big(\sum_{\tau\in S_{m-1,n}}\sum_{(j_1,\dots,j_{m+n-1})\in I_\tau}(-\lambda)^{\ot j_1}\ot d_1\ot \cdots\ot (-\lambda)^{\ot j_{m+n-1}}\ot d_{m+n-1}\Big)\\
&+ b_1\ot\Big(\sum_{\omega\in S_{m,n-1}}\sum_{(k_1,\dots,k_{m+n-1})\in I_\omega}(-\lambda)^{\ot k_1}\ot e_1\ot \cdots\ot (-\lambda)^{\ot k_{m+n-1}} \ot e_{m+n-1}\Big)\\
=&a_1\ot(\fraka'\,\hat{\ssha}\,\frakb)+b_1\ot(\fraka\,\hat{\ssha}\,\frakb').
\end{align*}
On the other hand,
$$
T_2=(-\lambda)\ot\Big(\sum\limits_{\sigma\in S_{m,n}}\sum\limits_{\fraki\in I_\sigma\atop i_1>0}(-\lambda)^{\ot i_1-1}\ot c_{\sigma(1)}\ot (-\lambda)^{\ot i_2}\ot c_{\sigma(2)}\ot\cdots\ot (-\lambda)^{\ot i_{m+n}} \ot c_{\sigma(m+n)}\Big)
\stackrel{\eqref{eq:astexp2}}{=}-\lambda\ot(\fraka\,\hat{\ssha}\,\frakb).
$$
Hence, we have obtained the desired equality \meqref{eq:ast2}.
\end{proof}

We next define a binary operation on $\shai(A)$ by an extension of scalar from $\hat{\ssha}$.
\begin{equation}\label{eq:shpr}
\begin{split}
\shpr:&\shai(A)\,\underline{\ot} \,\shai(A)\rar\shai(A), \\
& \fraka\shpr\frakb\coloneqq a_0b_0\ot(\fraka'\,\hat{\ssha}\,\frakb'),\
\fraka=a_0\ot\fraka'\in A^{\ot m}, \frakb=b_0\ot\frakb'\in A^{\ot n}, \ m,n\geq1.
\end{split}
\end{equation}
with the convention in Eq.~\meqref{eq:auga}.

\begin{prop}\label{prop:ccdrey}
Given a pointed algebra $(A,\lambda)$,  	
the quadruple $\big(\shai(A),\shpr,P_{\sshai(A)},\lambda\big)$ is an $(A,\lambda)$-\wtd Reynolds algebra.
\end{prop}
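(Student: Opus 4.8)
The plan is to prove the statement in three stages: reduce it to structural properties of the complete shuffle product $\hat{\ssha}$, then establish the associativity of $\hat{\ssha}$ (which I expect to be the crux), and finally read off the \wtd Reynolds identity directly from the recursion \eqref{eq:ast2}.

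First I would note that, under the identification $\shai(A)=A\,\hat{\ot}\,\shai^+(A)$ of \eqref{eq:auga}, the product $\shpr$ of \eqref{eq:shpr} is precisely the tensor-product multiplication of the commutative algebra $A$ (acting on the left-most factor) with the pair $(\shai^+(A),\hat{\ssha})$. Hence unitality (with identity $1_A\in A^{\ot 1}$), commutativity and associativity of $\shpr$ reduce to the corresponding properties of $\hat{\ssha}$ together with those of $A$; moreover the natural inclusion $j_A\colon A\to\shai(A),\ a\mapsto a\in A^{\ot 1}$, is then a homomorphism of pointed algebras carrying $\lambda$ to $\lambda_R:=\lambda\in A^{\ot 1}$, which supplies the $(A,\lambda)$-algebra structure. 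Commutativity of $\hat{\ssha}$ is immediate from the manifest symmetry of \eqref{eq:astexp2} under interchanging $\fraka$ and $\frakb$ (the bijection $S_{m,n}\cong S_{n,m}$ preserves the admissible insertion data $I_\sigma$), so the only substantial algebraic point is the associativity of $\hat{\ssha}$.

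I expect associativity to be the main obstacle, precisely because the recursion \eqref{eq:ast2} is self-referential: $\fraka\,\hat{\ssha}\,\frakb$ reappears on its right-hand side. The plan is to induct on the total tensor degree $m+n+p$ of $\fraka\in A^{\ot m}$, $\frakb\in A^{\ot n}$, $\frakc\in A^{\ot p}$, the cases with a $1_\bk$-factor being handled at once by unitality. For the inductive step I would expand each of $(\fraka\,\hat{\ssha}\,\frakb)\,\hat{\ssha}\,\frakc$ and $\fraka\,\hat{\ssha}\,(\frakb\,\hat{\ssha}\,\frakc)$ by peeling off leading tensor factors via \eqref{eq:ast2}, applied termwise and extended by continuity of $\hat{\ssha}$. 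A short computation shows that each associated product $X$ satisfies a fixed-point equation of the shape $X=C_X-c\,\lambda\ot X$ with the same scalar $c$ (in fact $c=2$, one copy of the front $\lambda$ coming from the inner factor and one from the outer product), where $C_X$ is a sum of products such as $a_1\ot\big((\fraka'\,\hat{\ssha}\,\frakb)\,\hat{\ssha}\,\frakc\big)$ of strictly smaller total degree, together with its two cyclic analogues. By the induction hypothesis these lower-degree triple products are associative, hence unambiguous, so the two right-hand sides $C_X$ coincide. Since prepending $\lambda$ strictly raises tensor degree, the operator $\id+c\,\lambda\ot(-)$ is invertible on the complete space $\shai^+(A)$ with Neumann inverse $\sum_{k\ge 0}(-c)^k\lambda^{\ot k}\ot(-)$; it is here that the completeness from Proposition~\ref{prop:ctoa} is indispensable. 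The two associated products thus have the same image under an invertible operator and therefore coincide, completing the induction.

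Finally, granting that $(\shai(A),\shpr)$ is a commutative associative unital $A$-algebra, the \wtd Reynolds identity \eqref{Gra} with weight $\lambda_R=\lambda$ follows at once from \eqref{eq:ast2}. It suffices to check it on pure tensors $\fraka=a_1\ot\fraka'$ and $\frakb=b_1\ot\frakb'$, the general case following by $\bk$-linearity and continuity. Writing $P:=P_{\sshai(A)}$, so that $P(\fraka)=1_A\ot\fraka$, the definition \eqref{eq:shpr} gives $P(\fraka)\shpr P(\frakb)=1_A\ot(\fraka\,\hat{\ssha}\,\frakb)$, while the same definition (using $\fraka\,\hat{\ssha}\,1_\bk=\fraka$ and associativity for the last term) yields $P(\fraka\shpr P(\frakb))=1_A\ot a_1\ot(\fraka'\,\hat{\ssha}\,\frakb)$, $P(P(\fraka)\shpr\frakb)=1_A\ot b_1\ot(\fraka\,\hat{\ssha}\,\frakb')$ and $P(P(\fraka)\shpr\lambda\shpr P(\frakb))=1_A\ot\lambda\ot(\fraka\,\hat{\ssha}\,\frakb)$. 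These three terms are exactly $1_A\ot(-)$ applied to the three summands on the right of \eqref{eq:ast2}, so \eqref{Gra} is obtained simply by prepending $1_A$ to \eqref{eq:ast2}. Thus $\big(\shai(A),\shpr,P_{\sshai(A)},\lambda\big)$ is an $(A,\lambda)$-\wtd Reynolds algebra, with the associativity of $\hat{\ssha}$ being the only non-formal step.
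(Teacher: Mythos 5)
Your proposal is correct and follows essentially the same route as the paper: the same induction on total tensor degree using the recursion \eqref{eq:ast2}, the same fixed-point equation $X + 2\lambda\ot X = C_X$ whose right-hand sides agree by the induction hypothesis, the same inversion of $\id + 2\lambda\ot(-)$ via a Neumann-type series on the complete space (the paper's operator $Q_\lambda$), and the same derivation of the weighted Reynolds identity by prepending $1_A$ to \eqref{eq:ast2}. The only differences are cosmetic: you make the commutativity and unitality checks explicit, which the paper leaves implicit.
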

\begin{proof}
First we show that $\hat{\ssha}$ is associative, and hence so is $\shpr$ defined in Eq.~\meqref{eq:shpr}.		

If at least one of $\fraka,\frakb$ or $\frakc$ is $1_\bk$, then it is clear that $(\fraka\,\hat{\ssha}\,\frakb)\,\hat{\ssha}\,\frakc=\fraka\,\hat{\ssha}\,(\frakb\,\hat{\ssha}\,\frakc)$. In the remaining case, we write $\fraka=a_1\ot\fraka'\in A^{\ot m},\,\frakb=b_1\ot\frakb'\in A^{\ot n}$ and $\frakc=c_1\ot\frakc'\in A^{\ot l},\,m,n,l>0$. 
 The associativity of $\hat{\ssha}$ can be checked by induction on $m+n+l\geq 3$ as follows.

First applying the equality
\eqref{eq:ast2} to $(\fraka\,\hat{\ssha}\,\frakb)\,\hat{\ssha}\,\frakc$ and $\fraka\,\hat{\ssha}\,(\frakb\,\hat{\ssha}\,\frakc)$ twice respectively, we obtain
\begin{align*}
(\fraka\,\hat{\ssha}\,\frakb)\,\hat{\ssha}\,\frakc&
=a_1\ot\big((\fraka'\,\hat{\ssha}\,\frakb)\,\hat{\ssha}\,\frakc\big)+ b_1\ot\big((\fraka\,\hat{\ssha}\,\frakb')\,\hat{\ssha}\,\frakc)-\lambda\ot\big((\fraka\,\hat{\ssha}\,\frakb)\,\hat{\ssha}\,\frakc\big)\\
&\quad+c_1\ot\big((\fraka\,\hat{\ssha}\,\frakb)\,\hat{\ssha}\,\frakc'\big)-\lambda\ot\big((\fraka\,\hat{\ssha}\,\frakb)\,\hat{\ssha}\,\frakc\big),\\[.5em]
\fraka\,\hat{\ssha}\,(\frakb\,\hat{\ssha}\,\frakc)&
=a_1\ot(\fraka'\,\hat{\ssha}\,(\frakb\,\hat{\ssha}\,\frakc))\\ &\quad+(b_1\ot(\fraka\,\hat{\ssha}\,(\frakb'\,\hat{\ssha}\,\frakc))+c_1\ot(\fraka\,\hat{\ssha}\,(\frakb\,\hat{\ssha}\,\frakc'))
-\lambda\ot(\fraka\,\hat{\ssha}\,(\frakb\,\hat{\ssha}\,\frakc)))\\
&\quad-\lambda\ot(\fraka\,\hat{\ssha}\,(\frakb\,\hat{\ssha}\,\frakc)).
\end{align*}
In terms of the linear operator
$$Q_\lambda: \shai{^+}(A)\to \shai{^+}(A), \quad \fraka\mapsto \fraka+2\lambda\ot\fraka,$$
above equalities are equivalent to
\begin{align*}
	&Q_\lambda((\fraka\,\hat{\ssha}\,\frakb)\,\hat{\ssha}\,\frakc)=a_1\ot((\fraka'\,\hat{\ssha}\,\frakb)\,\hat{\ssha}\,\frakc)+ b_1\ot((\fraka\,\hat{\ssha}\,\frakb')\,\hat{\ssha}\,\frakc)+c_1\ot((\fraka\,\hat{\ssha}\,\frakb)\,\hat{\ssha}\,\frakc'),\\
	&Q_\lambda(\fraka\,\hat{\ssha}\,(\frakb\,\hat{\ssha}\,\frakc))=a_1\ot(\fraka'\,\hat{\ssha}\,(\frakb\,\hat{\ssha}\,\frakc))
	+b_1\ot(\fraka\,\hat{\ssha}\,(\frakb'\,\hat{\ssha}\,\frakc))+c_1\ot(\fraka\,\hat{\ssha}\,(\frakb\,\hat{\ssha}\,\frakc')).
\end{align*}
Now applying the induction hypothesis to the second tensor factor of each term on the right hand sides, we obtain 
 $$Q_\lambda((\fraka\,\hat{\ssha}\,\frakb)\,\hat{\ssha}\,\frakc)=Q_\lambda(\fraka\,\hat{\ssha}\,(\frakb\,\hat{\ssha}\,\frakc)).$$
Note that the operator $Q_\lambda$ has the inverse operator
$$Q_\lambda^{-1}:\shai{^+}(A) \to \shai{^+}(A), \quad \fraka\mapsto \fraka+\sum_{r\geq1}(-2\lambda)^{\ot r}\ot\fraka$$
and hence is injective.
Thus the associativity of $\hat{\ssha}$ is verified.

Second, we show that $P_{\sshai(A)}$ is a \wtd Reynolds operator on $\shai(A)$ satisfying condition \meqref{Gra}.
Indeed, for $\fraka=a_0\otimes\fraka'\in A^{\ot m}$ and $\frakb=b_0\ot\frakb'\in A^{\ot n},\,m,n\geq1$, we have
\begin{align*}
P_{\sshai(A)}(\fraka)\shpr P_{\sshai(A)}(\frakb)&\stackrel{\eqref{eq:shpr},\,\eqref{eq:rey-oper}}{=}1_A\ot(\fraka\,\hat{\ssha}\,\frakb)\\
&\stackrel{\eqref{eq:ast2}}{=}
1_A\ot(a_0\ot(\fraka'\,\hat{\ssha}\,\frakb)+b_0\ot(\fraka\,\hat{\ssha}\,\frakb')-\lambda\ot(\fraka\,\hat{\ssha}\,\frakb))\\
&\stackrel{\eqref{eq:shpr},\,\eqref{eq:rey-oper}}{=}P_{\sshai(A)}(\fraka\shpr P_{\sshai(A)}(\frakb)+P_{\sshai(A)}( \fraka)\shpr\frakb-\lambda\shpr P_{\sshai(A)}(\fraka)\shpr P_{\sshai(A)}(\frakb)).
\end{align*}

In summary, we have shown that $\big(\shai(A),\shpr,P_{\sshai(A)},\lambda\big)$ is a \wtd Reynolds algebra.
\end{proof}

\subsubsection{{\bf Step 3.} $\shai(A)$ as the free \ocmra}
\mlabel{sss:fcwra}

Let $j_A:A\rar \shai(A),\,a\mapsto a$ be the natural injection.

\begin{theorem}
Given a pointed algebra $(A,\lambda)$,  	
the sextuple $\big(\shai(A),\shai(A)_n,\shpr,P_{\sshai(A)},\lambda,j_A\big)$ is the free object in the category $\crac$ of \ocmra\ over $(A,\lambda)$.
\mlabel{th:fcdr}
\end{theorem}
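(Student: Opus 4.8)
The plan is to verify the universal property directly by constructing, for an arbitrary target \ocmra $(R,R_n,P_R,\lambda_R,i_R)$ over $(A,\lambda)$, a unique morphism $\bar{f}:\shai(A)\to R$ and checking it respects all the structure. Since Proposition \ref{prop:ctoa} already shows $(\shai(A),\shai(A)_n,P_{\sshai(A)})$ is a \compopgen and Proposition \ref{prop:ccdrey} shows $(\shai(A),\shpr,P_{\sshai(A)},\lambda)$ is an $(A,\lambda)$-\wtd Reynolds algebra, the object itself is already known to lie in $\crac$; only the universal mapping property remains. The guiding principle is that every pure tensor should be forced to a specific image once $\bar{f}$ is required to be an $A$-algebra map commuting with the operators.

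First I would establish \emph{uniqueness} as a formula that simultaneously dictates the definition. For a pure tensor $\fraka = a_1\ot a_2\ot\cdots\ot a_k \in A^{\ot k}$, the identity $\fraka = a_1\, P_{\sshai(A)}(a_2\,P_{\sshai(A)}(\cdots P_{\sshai(A)}(a_k)\cdots))$ noted in the proof of Proposition \ref{prop:ctoa} shows that any morphism must satisfy
\begin{equation*}
\bar{f}(\fraka) = i_R(a_1)\,P_R\big(i_R(a_2)\,P_R(\cdots P_R(i_R(a_k))\cdots)\big),
\end{equation*}
because $\bar{f}$ is an $A$-module map (so $\bar{f}(a\cdot -)=i_R(a)\bar{f}(-)$) intertwining $P_{\sshai(A)}$ and $P_R$ and sending $1_A$ to $1_R$. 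This pins down $\bar f$ on pure tensors, hence on $\sha^+(A)$; I would then take this same formula as the \emph{definition} of $\bar f$ on pure tensors. The key point making $\bar f$ well-defined on all of $\shai(A)$ is completeness: since $\bar f(A^{\ot k})=\bar f(\overline{\Fil}^{k}$-level data$)$ lands in $\overline{\Fil}^k_{P_R}R = R_k$ by the compatibility $R_n=\overline{\Fil}^n_{P_R}R$ together with the Lemma giving $P_R(R_n)\subseteq R_{n+1}$, a formal series $\sum_k \fraka_k$ maps to a Cauchy series in $R$ that converges by the isomorphism $R\cong\varprojlim R/R_n$. Thus $\bar f$ extends continuously to $\shai(A)$ and automatically commutes with $P_{\sshai(A)}$ and $P_R$ by construction.

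The substantive step is verifying that $\bar f$ is an \emph{algebra} homomorphism, i.e. $\bar f(\fraka\shpr\frakb)=\bar f(\fraka)\bar f(\frakb)$. I would prove this by induction on the total tensor length $m+n$, using the recursion in Eq.~\meqref{eq:ast2} for $\hat{\ssha}$ together with the extension-of-scalars formula \meqref{eq:shpr}. Writing $\fraka=a_0\ot\fraka'$, $\frakb=b_0\ot\frakb'$, the product $\bar f(\fraka)\bar f(\frakb)$ is $i_R(a_0)i_R(b_0)\,P_R(\widehat{\fraka'})P_R(\widehat{\frakb'})$ where $\widehat{(-)}$ denotes the iterated-integral image of the tail; applying the Reynolds identity \meqref{Gra} in $R$ expands $P_R(\widehat{\fraka'})P_R(\widehat{\frakb'})$ into exactly the three terms matching $a_0'\ot(\fraka''\hat\ssha\frakb')+b_0'\ot(\fraka'\hat\ssha\frakb'')-\lambda\ot(\fraka'\hat\ssha\frakb')$ after applying the inductive hypothesis to the shorter tails. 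The main obstacle is bookkeeping this induction cleanly: the $-\lambda$ correction term in the recursion \meqref{eq:ast2} must be matched against the $-P_R(P_R(\cdot)\lambda_R P_R(\cdot))$ term in the Reynolds identity, using $i_R(\lambda)=\lambda_R$, and one must confirm that the convergence of the resulting infinite expansion is controlled by the filtration so that the induction, which produces terms of strictly higher tensor order at each $-\lambda$ insertion, actually converges in the complete topology of $R$ rather than merely formally. Once the algebra-homomorphism property is secured, the remaining checks—that $\bar f$ is filtration-preserving, continuous, and satisfies $\bar f\, j_A = i_R$ and $\bar f(\lambda)=\lambda_R$—follow immediately from the definition on pure tensors, completing the verification that $\bar f$ is the unique morphism in $\crac$.
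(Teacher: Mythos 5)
Your proposal is correct and follows essentially the same route as the paper: uniqueness forces the iterated formula on pure tensors, the compatibility $R_n=\overline{\Fil}^n_{P_R}R$ together with completeness of $R$ extends $\bar f$ to all of $\shai(A)$, and multiplicativity is proved by induction on tensor length by playing the recursion \meqref{eq:ast2} against the Reynolds identity in $R$. The one point you leave ``to be confirmed'' is exactly where the paper does its real work: since the term $-\lambda\ot(\fraka'\,\hat{\ssha}\,\frakb')$ involves the very product being computed (same tail lengths, so the induction hypothesis does not apply to it), the argument reduces to an identity of the form $F(u)=F(v)$ with $F=\id_R+P_R\lambda_R$, and one must prove $F$ is injective; the paper does this by constructing the inverse as the geometric series $\sum_{k\geq 0}(-P_R\lambda_R)^k$, which converges because $P_R(R_n)\subseteq R_{n+1}$, $R$ is complete, and $\bigcap_n R_n=\{0\}$. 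Your ``infinite expansion controlled by the filtration'' is the same mechanism phrased as iterated substitution, but a complete write-up must carry out this inversion explicitly rather than assert it.
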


\begin{proof}
By Proposition~\ref{prop:ccdrey}, it remains to verify the universal property of $\shai(A)$ as the free \ocmra over $(A,\lambda)$.

Let $(R,R_n,P_R,\lambda_R,i_R)\in\crac$ be a \ocmra over $(A,\lambda)$ (with a pointed algebra homomorphism $i_R:(A,\lambda)\rar (R,\lambda_R)$ as the structure map), we will construct an $A$-module map $\bar{f}:\shai(A)\rar R$ as follows. First define the $A$-module map
\[\tilde{f}:\bigoplus_{i\geq1} A^{\ot i}\rar R,\, A^{\ot(n+1)}\ni \fraka=a_0\ot\cdots\ot a_n\mapsto i_R(a_0)P_R\big(i_R(a_1)P_R(\cdots P_R(i_R(a_n))\cdots)\big), \quad n\geq 0.\]
Then $\tilde{f}(A^{\ot (n+1)})\subseteq AP_R\big(\tilde{f}(A^{\ot n})\big), n\geq 1$.
Thus from $\tilde{f}(A)=i_R(A)\subseteq R_0=R$, we recursively obtain
$$ \tilde{f}(A^{\ot (n+1)})\subseteq AP_R\big(\tilde{f}(A^{\ot n})\big) \subseteq
	\moverline{ A P(\overline{\Fil}^{n-1}_PR)} =
\overline{\Fil}^n _PR=R_n, \quad n\geq 1,$$
and hence
$$\tilde{f}\,\bigg(\bigoplus_{i\geq n+1} A^{\ot i}\bigg)
\subseteq\overline{\Fil}^n _PR=R_n, \quad n\geq 0.$$

Correspondingly, we have an induced  $A$-module map
$$\prod_{n\geq0}\bigg(\bigoplus_{i\geq1} A^{\ot i}\bigg/\bigoplus_{i\geq n+1} A^{\ot i}\bigg)\to \prod_{n\geq0}R/R_n,\ \Big(\fraka_n+\bigoplus_{i\geq n+1} A^{\ot i}\Big)_{n\geq0}\mapsto \big(\tilde{f}(\fraka_n)+R_n\big)_{n\geq0}.$$
If $(\fraka_n+\bigoplus_{i\geq n+1} A^{\ot i})_{n\geq0}\in \varprojlim\left(\bigoplus_{i\geq1} A^{\ot i}\bigg/\bigoplus_{i\geq n+1} A^{\ot i}\right)$, then
$\fraka_{n+1}-\fraka_n\in \bigoplus_{i\geq n+1} A^{\ot i}$ implies that $\tilde{f}(\fraka_{n+1})-\tilde{f}(\fraka_n)=\tilde{f}(\fraka_{n+1}-\fraka_n)\in R_n$ for all $n\geq0$,
so we obtain a $A$-module map
$$\hat{f}:\shai(A)=\varprojlim\left(\bigoplus_{i\geq1} A^{\ot i}\bigg/\bigoplus_{i\geq n+1} A^{\ot i}\right)\rar  \hat{R}= \varprojlim R/R_n,\  (\fraka_n+\bigoplus_{i\geq n+1} A^{\ot i})_{n\geq0}\mapsto (\tilde{f}(\fraka_n)+R_n)_{n\geq0}.$$

Let $\bar{f}=\kappa_R^{-1}\hat{f}$. We show that it is our desired $A$-module map from $\shai(A)$ to $R$ such that  $i_R=\bar{f}\circ j_A$.

Since $\tilde{f}\,\bigg(\bigoplus_{i\geq n+1} A^{\ot i}\bigg)
\subseteq R_n$ and $\shai(A)_n=\varprojlim\left(\left(\bigoplus_{i\geq n+1} A^{\ot i}+\bigoplus_{i\geq k+1} A^{\ot i}\right)\bigg/\bigoplus_{i\geq k+1} A^{\ot i}\right)$, we have
$$\hat{f}(\shai(A)_n)\subseteq \hat{R}_n=\bigg\{(r_k+R_k)_{k\geq0}\in \prod_{k\geq0}(R_n+R_k)/R_k\,|\,r_{k+1}-r_k\in R_k\bigg\},\quad n\geq0.$$

On the other hand, we show that $\kappa_R(R_n)=\hat{R}_n$ below, implying $\bar{f}\,\bigg(\shai(A)_n\bigg)\subseteq R_n,\ n\geq0$.
In fact, by the definition of $\kappa_R$, it is clear that $\kappa_R(R_n)\subseteq\hat{R}_n$.
Conversely, for any $x\in\kappa_R^{-1}(\hat{R}_n)$, we have
$$\kappa_R(x)=(x+R_k)_{k\geq0}\in \hat{R}_n.$$
When $k\geq n$, we see that $x+R_k\in R_n/R_k$, i.e. $x\in R_n$. Hence, $\kappa_R^{-1}(\hat{R}_n)\subseteq R_n$. We have shown that $\kappa_R(R_n)=\hat{R}_n$.

Fix $\fraka=a_0\otimes\fraka'\in A^{\ot m}$ and $\frakb=b_0\ot\frakb'\in A^{\ot n},\,m,n\geq1$. By the construction of $\bar{f}$, we have
\[\bar{f}(P_{\sshai(A)}(\fraka))=\bar{f}(1_A\ot\fraka)=i_R(1_A)P_R(\bar{f}(\fraka))=P_R(\bar{f}(\fraka)).\]
Hence, $\bar{f}\, P_{\sshai(A)}=P_R\,\bar{f}$.
Thus we have shown that $\bar{f}$ is a homomorphism of operated $A$-modules that preserves the filtrations.

Next we check that $\bar{f}$ is a \wtd Reynolds algebra homomorphism. We begin with verifying
 $\bar{f}(\fraka\shpr\frakb)=\bar{f}(\fraka)\bar{f}(\frakb)$ by induction on $m+n$.
When one of $m$ or $n$ is $1$, it holds simply because $i_R$ is an algebra homomorphism. When $m$ and $n$ are both greater than $1$, writing $\fraka=a_1\ot \fraka'$ and $\frakb=b_1\ot \frakb'$, we first prove
\begin{equation}
P_R(\bar{f}(\fraka'\,\hat{\ssha}\,\frakb'))=P_R(\bar{f}(\fraka'))P_R(\bar{f}(\frakb')).
\mlabel{eq:reyhom}
\end{equation}
We rewrite Eq. \meqref{eq:ast2} via \meqref{eq:rey-oper} and \meqref{eq:shpr} as
\[\fraka'\,\hat{\ssha}\,\frakb'=\fraka'\shpr P_{\sshai(A)}(\frakb')+P_{\sshai(A)}( \fraka')\shpr\frakb'-\lambda\shpr P_{\sshai(A)}(\fraka'\,\hat{\ssha}\,\frakb').\]
Applying $\bar{f}$ and then $P_R$ to both sides of this equality,
we obtain
\begin{equation*}
\begin{split}
P_R(\bar{f}(\fraka'\,\hat{\ssha}\,\frakb'))
&=P_R\left(\bar{f}(\fraka')\bar{f}\big(P_{\sshai(A)}(\frakb')\big)+\bar{f}\big(P_{\sshai(A)}( \fraka')\big)\bar{f}(\frakb')-\bar{f}(\lambda)\bar{f}\big(P_{\sshai(A)}(\fraka'\,\hat{\ssha}\,\frakb')\big)\right)\\
&=P_R(\bar{f}(\fraka')P_R(\bar{f}(\frakb')))+P_R(P_R(\bar{f}(\fraka'))\bar{f}(\frakb'))-P_R(\lambda_RP_R(\bar{f}(\fraka'\,\hat{\ssha}\,\frakb'))),
\end{split}
\end{equation*}
by the induction hypothesis and the equality $\bar{f}\, P_{\sshai(A)}=P_R\, \bar{f}$ just obtained above. Equivalently, $P_R(\bar{f}(\fraka'\,\hat{\ssha}\,\frakb'))$ satisfies the  equation
\begin{equation*} 
(\id_R+P_R\lambda_R)\Big(P_R(\bar{f}(\fraka'\,\hat{\ssha}\,\frakb'))\Big)=P_R(\bar{f}(\fraka')P_R(\bar{f}(\frakb')))+P_R(P_R(\bar{f}(\fraka'))\bar{f}(\frakb')).
\end{equation*}
On the other hand, the weighted Reynolds identity \meqref{Gra} gives
$$(\id_R+P_R\lambda_R)\Big(P_R(\bar{f}(\fraka'))P_R(\bar{f}(\frakb'))\Big)=P_R(\bar{f}(\fraka')P_R(\bar{f}(\frakb')))+P_R(P_R(\bar{f}(\fraka'))\bar{f}(\frakb')).
$$
Thus to prove Eq.~\meqref{eq:reyhom}, we just need to prove that the operator $F\coloneqq \id_R+P_R\lambda_R$ on $R$ is invertible. For this, we directly construct its inverse $G$.
In fact, using the notation given after Definition~\mref{def:com}, we consider
the maps (retaining the composition symbol $\circ$ for precision)
$$\kappa_n\circ\Big(\sum\limits_{k=0}^{n-1}(-P_R\lambda_R)^k\Big):R\to R/R_n,\quad n\geq1, $$
and the universal property of $\hat{R}$ as an inverse limit provides a linear map $G':R\rar\hat{R}$ such that $\hat{\kappa}_n\circ G'=\kappa_n\circ\Big(\sum\limits_{k=0}^{n-1}(-P_R\lambda_R)^k\Big)$.
Since $R$ is complete and $\kappa_R:R\to \hat{R}$ is an isomorphism, we take $G\coloneqq \kappa_R^{-1}\circ G'$.
For $n\geq 1$, we have
\begin{align*}
\kappa_n&\circ(G\circ F)=(\kappa_n\circ\kappa_R^{-1})\circ G'\circ F=(\hat{\kappa}_n\circ G')\circ F\\
&=\kappa_n\circ\left(\sum_{k=0}^{n-1}(-P_R\lambda_R)^k\right)\circ(\id_R+ P_R\lambda_R)=\kappa_n\circ(\id_R-(-P_R\lambda_R)^n)=\kappa_n,\\
\kappa_n&\circ(F\circ G)=\kappa_n\circ F\circ \kappa_R^{-1}\circ G'=(\id_{R/R_n}+(P_R)_n\times (\lambda_R+R_n))\circ(\kappa_n\circ\kappa_R^{-1})\circ G'\\
&=(\id_{R/R_n}+(P_R)_n\times (\lambda_R+R_n))\circ(\hat{\kappa}_n\circ G')\\
&=\kappa_n\circ(\id_R+P_R\lambda_R)\circ\left(\sum_{k=0}^{n-1}(- P_R\lambda_R )^k\right)=\kappa_n\circ(\id_R-(-P_R\lambda_R )^n)=\kappa_n.
\end{align*}
Hence, $F\circ G=G\circ F=\id_R$, as $\ker\kappa_R=\bigcap_{n\geq1} R_n=\{0\}$.
Thus, we obtain Eq.~\meqref{eq:reyhom}, and then
\begin{align*}
\bar{f}(\fraka\shpr\frakb)&\stackrel{\eqref{eq:shpr}}{=}
\bar{f}(a_0b_0\otimes(
\fraka'\,\hat{\ssha}\,\frakb'))=i_R(a_0b_0)P_R(\bar{f}(\fraka'\,\hat{\ssha}\,\frakb'))
\stackrel{\eqref{eq:reyhom}}{=}i_R(a_0)P_R(\bar{f}(\fraka'))i_R(b_0)P_R(\bar{f}(\frakb'))=\bar{f}(\fraka)\bar{f}(\frakb).
\end{align*}
In summary, we have shown that $\bar{f}$ is also an algebra homomorphism.

Finally, since $\bar{f}$ is required to be a \wtd Reynolds algebra homomorphism such that $i_R=\bar{f}\circ j_A$, we must have
\begin{align*}
\bar{f}(a_0\ot\cdots\ot a_n)&=\bar{f}\Big(a_0\shpr P_{\sshai(A)}(a_1\shpr P_{\sshai(A)}\big(\cdots P_{\sshai(A)}(a_n)\cdots)\big)\Big)\\
&=i_R(a_0)P_R\big(i_R(a_1)P_R(\cdots P_R(i_R(a_n))\cdots)\big),
\end{align*}
for $n\geq0$. Hence, such a map $\bar{f}$ is unique.
\end{proof}

\subsubsection{{\bf Step 4.} $\shai(A)$ as the free \ocmdra}
\mlabel{sss:fcdra}
We next apply Theorem~\ref{th:fcdr} to construct the free commutative \ocmdra over
a \ddiff algebra $(A,d)$. Define a linear operator $D_{\sshai(A)}$ on $\shai(A)$ by
\begin{equation}\mlabel{eq:ddoa}
	D_{\sshai(A)}(a_0)\coloneqq d(a_0),\quad D_{\sshai(A)}(\fraka)\coloneqq d(a_0)\ot\fraka'+a_0\shpr\fraka'-d(1_A)\shpr\fraka,
\end{equation}
for $\fraka=a_0\ot\fraka'\in A^{\ot n}$ with $n\geq2$.
Alternatively, we have
\[D_{\sshai(A)}(a_0\ot\cdots\ot a_k)=d(a_0)\ot a_1\ot\cdots\ot a_k+a_0a_1\ot a_2\ot\cdots\ot a_k-d(1_A)a_0\ot a_1\ot\cdots\ot a_k,\]
for $a_0,\dots,a_k\in A$ with $k\geq1$.

By definition we have 
\begin{equation}
D_{\sshai(A)}P_{\sshai(A)}=\id_{\sshai(A)}, \quad D_{\sshai(A)}\left(\shai(A)_{n+1}\right)\subseteq \shai(A)_n,\quad n\geq1.
\mlabel{eq:dcomp}
\end{equation}

Now we can give the statement and proof of our construction of free \ddiffrey algebras.

\begin{theorem}
Let $(A,d)$ be a \ddiff algebra. The sextuple $$\big(\shai(A),\shai(A)_n,\shpr,D_{\sshai(A)},P_{\sshai(A)},j_A\big)$$
is the free object in the category $\cdrac$ of \ocmdras over $(A,d)$.
\mlabel{thm:freedrey}
\end{theorem}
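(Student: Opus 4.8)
The plan is to build on Theorem~\ref{th:fcdr}, which already exhibits $(\shai(A),\shai(A)_n,\shpr,P_{\sshai(A)},\lambda,j_A)$ as the free \ocmra over the pointed algebra $(A,d(1_A))$. Two things then remain: that the sextuple equipped with the extra operator $D_{\sshai(A)}$ is genuinely an object of $\cdrac$, and that it has the universal property of Definition~\ref{df:unimod}. For the latter I would exploit that a morphism of \ocmdras is precisely a morphism of \ocmras that in addition commutes with the \wtddiff operators; so the unique \ocmra morphism provided by Theorem~\ref{th:fcdr} is the only candidate, and I only need to check it is compatible with $D$.

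First I would verify that the sextuple lies in $\cdrac$. By Proposition~\ref{prop:ccdrey} the triple $(\shai(A),\shpr,P_{\sshai(A)})$ is already a \wtd Reynolds algebra of weight $\lambda=d(1_A)=D_{\sshai(A)}(1_A)$, the relation $D_{\sshai(A)}P_{\sshai(A)}=\id$ holds by~\eqref{eq:dcomp}, and the compatibility $j_A\,d=D_{\sshai(A)}\,j_A$ is immediate from~\eqref{eq:ddoa} since $D_{\sshai(A)}(a)=d(a)$ on $A=A^{\ot 1}$. Hence, by Definition~\ref{de:rrb}, the only missing ingredient is that $D_{\sshai(A)}$ obeys the modified differential identity~\eqref{Dda}, that is, $D_{\sshai(A)}(\frakx\shpr\fraky)=D_{\sshai(A)}(\frakx)\shpr\fraky+\frakx\shpr D_{\sshai(A)}(\fraky)-\lambda\shpr\frakx\shpr\fraky$. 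I would prove this by a direct computation organized as an induction on the total tensor degree and driven by the shuffle recursion~\eqref{eq:ast2}, paralleling the verification of the Reynolds identity for $P_{\sshai(A)}$. It is convenient to rewrite~\eqref{eq:ddoa} as $D_{\sshai(A)}(\fraka)=\delta(a_0)\shpr P_{\sshai(A)}(\fraka')+a_0\shpr\fraka'$ for $\fraka=a_0\ot\fraka'$, where $\delta\coloneqq d-d(1_A)\id_A$ is a weight-$0$ derivation on $A$ by Lemma~\ref{lem:twistdiff} (note $d(1_A)$ is central as $A$ is commutative); this separates the ``apply $\delta$ to the head'' part from the ``merge the first two tensor factors'' part and keeps the inductive bookkeeping manageable.

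Next I would turn to the universal property. Given any \ocmdra $(R,R_n,D_R,P_R,i_R)$ over $(A,d)$, forgetting $D_R$ presents it as a \ocmra over $(A,d(1_A))$, so Theorem~\ref{th:fcdr} yields a unique \ocmra morphism $\bar f\colon\shai(A)\to R$ with $\bar f\,j_A=i_R$, which is multiplicative for $\shpr$, commutes with $P_{\sshai(A)}$, and respects the filtrations. It then suffices to check $\bar f\,D_{\sshai(A)}=D_R\,\bar f$; uniqueness is automatic, since any \ocmdra morphism is in particular a \ocmra morphism and hence equals $\bar f$. On a pure tensor $\fraka=a_0\ot\fraka'$ the check is short: applying $\bar f$ to~\eqref{eq:ddoa} and using multiplicativity, $\bar f\,P_{\sshai(A)}=P_R\,\bar f$, and $i_R\,d=D_R\,i_R$ gives
\[
\bar f\bigl(D_{\sshai(A)}(\fraka)\bigr)=D_R\bigl(i_R(a_0)\bigr)\,P_R\bigl(\bar f(\fraka')\bigr)+i_R(a_0)\,\bar f(\fraka')-D_R(1_R)\,\bar f(\fraka),
\]
whereas writing $\bar f(\fraka)=i_R(a_0)\,P_R(\bar f(\fraka'))$ and expanding $D_R(\bar f(\fraka))$ by the modified Leibniz rule~\eqref{Dda} in $R$, with $D_R(P_R(\bar f(\fraka')))=\bar f(\fraka')$ from $D_RP_R=\id$, produces exactly the same expression. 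Thus $\bar f\,D_{\sshai(A)}=D_R\,\bar f$ on all finite tensors.

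I expect the genuine obstacle to be the passage from finite tensors to the completion $\shai(A)=\prod_{k\ge1}A^{\ot k}$: since $\bar f(\frakx)$ for a true series $\frakx$ is only a limit of images of its truncations, extending the identity to all of $\shai(A)$ requires $D_R$ to respect the filtration, concretely $D_R(R_{n+1})\subseteq R_n$ for all $n$ (the analogous statement for $D_{\sshai(A)}$ is already recorded in~\eqref{eq:dcomp}). This is not formal from $D_R$ being a \wtddiff operator, but the operator-generated filtration $R_{n+1}=\moverline{A\,P_R(R_n)}$ of Definition~\ref{def:com} makes it accessible: on a generator $a\,P_R(z)$ with $a\in i_R(A)$ and $z\in R_n$, the modified Leibniz rule gives $D_R(a\,P_R(z))=\bigl(D_R(a)-a\,D_R(1_R)\bigr)P_R(z)+a\,z$, and the crucial point is that the defect $D_R(a)-a\,D_R(1_R)=i_R(\delta(a'))$ again lies in $i_R(A)$ (using $i_R\,d=D_R\,i_R$), whence $D_R(a\,P_R(z))\in A\,P_R(R_n)+A\,R_n\subseteq R_n$. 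So $D_R$ carries the generating $A$-submodule of $R_{n+1}$ into the closed submodule $R_n$; the delicate step, which I expect to demand careful use of the completeness of $R$ and the closedness of each $R_n$ rather than any single algebraic identity, is propagating this from the generators to their closure to obtain continuity of $D_R$. With that continuity in hand, a truncation-and-limit argument upgrades the finite-tensor identity to all of $\shai(A)$ and completes the proof.
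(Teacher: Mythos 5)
Your proposal follows essentially the same route as the paper: it reduces the Reynolds structure, the compatible complete module structure, and the universal property to Theorem~\ref{th:fcdr}, verifies the modified Leibniz rule \eqref{Dda} for $D_{\sshai(A)}$ via the recursion \eqref{eq:ast2} (the paper does this by a direct computation rather than an explicit induction, but the content is the same, resting on the identity $a_0\shpr\fraka'\shpr\frakb+\fraka\shpr b_0\shpr\frakb'-d(1_A)\shpr\fraka\shpr\frakb=a_0b_0\shpr(\fraka'\,\hat{\ssha}\,\frakb')$), and checks $\bar f\,D_{\sshai(A)}=D_R\,\bar f$ on pure tensors by exactly the computation the paper performs, expanding $D_R\bigl(i_R(a_0)P_R(\bar f(\fraka'))\bigr)$ with \eqref{Dda} and $D_RP_R=\id_R$. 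Your reformulation $D_{\sshai(A)}(\fraka)=\delta(a_0)\shpr P_{\sshai(A)}(\fraka')+a_0\shpr\fraka'$ with $\delta=d-d(1_A)\id_A$ is correct and a reasonable bookkeeping device. The one place where you genuinely depart from the paper is the passage from pure tensors to the completion $\prod_{k\ge1}A^{\ot k}$: the paper verifies the intertwining identity only on pure tensors and stops, whereas you observe that extending it to series requires a continuity statement $D_R(R_{n+1})\subseteq R_n$, and you derive this on the generating submodule $AP_R(R_n)$ from the operator-generated filtration and $i_R\,d=D_R\,i_R$. That observation is a real value-add; just be aware that, as you yourself flag, promoting $D_R(AP_R(R_n))\subseteq R_n$ to $D_R(\moverline{AP_R(R_n)})\subseteq R_n$ is itself a continuity assertion about $D_R$, so your argument is not yet closed at that point --- this is a genuine subtlety that the paper's own proof does not engage with either.
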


\begin{proof}
We first check that $D_{\sshai(A)}$ is a \ddiff operator on $\shai(A)$. Fix $\fraka=a_0\otimes\fraka'\in A^{\ot m}$ and $\frakb=b_0\ot\frakb'\in A^{\ot n},\,m,n\geq1$. We only deal with the case when $m,n>1$, since the case when $m=1$ or $n=1$ is simpler. Write $\fraka=a_0\otimes a_1\otimes\fraka''$ and $\fraka=b_0\otimes b_1\otimes\frakb''$ with $a_0,a_1,b_0,b_1\in A, \fraka''\in A^{\ot (m-1)}$ and $\frakb''\in A^{\ot (n-1)}$. We first derive
\begin{align*}
&\qquad a_0\shpr\fraka' \shpr \frakb+\fraka \shpr b_0\shpr \frakb'- d(1_A)\shpr \fraka \shpr \frakb\\
&\stackrel{\eqref{eq:shpr}}{=}
a_0b_0a_1\otimes(\fraka''\,\hat{\ssha}\,\frakb')+a_0b_0b_1\otimes(\fraka'\,\hat{\ssha}\,\frakb'')
-d(1_A)a_0b_0\otimes(\fraka'\,\hat{\ssha}\,\frakb')\\
&=
a_0b_0\shpr(a_1\otimes(\fraka''\,\hat{\ssha}\,\frakb')+b_1\otimes(\fraka'\,\hat{\ssha}\,\frakb'')
-d(1_A)\otimes(\fraka'\,\hat{\ssha}\,\frakb'))\\
&\stackrel{\eqref{eq:ast2}}{=}
a_0 b_0\shpr (\fraka'\,\hat{\ssha}\,\frakb').
\end{align*}
Then we have
\begin{align*}
D_{\sshai(A)}(\fraka \shpr \frakb)
&\stackrel{\eqref{eq:shpr}}{=}D_{\sshai(A)}(a_0b_0\ot(\fraka'\,\hat{\ssha}\,\frakb'))\\
&\stackrel{\eqref{eq:ddoa}}{=} d(a_0b_0)\ot(\fraka'\,\hat{\ssha}\,\frakb')+a_0b_0\shpr(\fraka'\,\hat{\ssha}\,\frakb')-d(1_A)\shpr(\fraka \shpr \frakb)\\
&=d(a_0b_0)\ot(\fraka'\,\hat{\ssha}\,\frakb')+a_0\shpr\fraka' \shpr \frakb+\fraka \shpr b_0\shpr \frakb'- 2d(1_A)\shpr \fraka \shpr \frakb.
\end{align*}
On the other hand,
\begin{align*}
&\qquad D_{\sshai(A)}(\fraka)\shpr \frakb + \fraka\shpr D_{\sshai(A)}(\frakb)- d(1_A)\shpr \fraka \shpr \frakb\\
&\stackrel{\eqref{eq:ddoa}}{=} (d(a_0)\ot\fraka'+a_0\shpr\fraka'-d(1_A)\shpr\fraka)\shpr \frakb
+\fraka\shpr (d(b_0)\ot\frakb'+b_0\shpr\frakb'-d(1_A)\shpr\frakb)- d(1_A)\shpr \fraka \shpr \frakb\\
&\stackrel{\eqref{eq:shpr}}{=} (d(a_0)b_0+a_0d(b_0)-d(1_A)a_0b_0)\ot(\fraka'\,\hat{\ssha}\,\frakb')+a_0\shpr\fraka' \shpr \frakb+\fraka \shpr b_0\shpr \frakb'- 2d(1_A)\shpr \fraka \shpr \frakb\\
&\stackrel{\eqref{Dda}}{=}d(a_0b_0)\ot(\fraka'\,\hat{\ssha}\,\frakb')+a_0\shpr\fraka' \shpr \frakb+\fraka \shpr b_0\shpr \frakb'- 2d(1_A)\shpr \fraka \shpr \frakb.
\end{align*}
Therefore, we have proved that Eq.~\eqref{Dda} holds for $D_{\sshai(A)}$.

Furthermore, given any \ocmdra $(R,R_n,D_R,P_R,i_R)$ (with a \ddiff algebra homomorphism $i_R:(A,d)\rar (R,D_R)$ as the structure map),
we just need to show that the induced homomorphism
$\bar{f}:\shai(A)\rar R$ of \wtd Reynolds algebras as defined in the proof of Theorem~\ref{th:fcdr} is moreover
a \ddiff algebra homomorphism. That is, $\bar{f}(D_{\sshai(A)}(\fraka)) = D_R(\bar{f}(\fraka))$ for all pure tensors $\fraka\in A^{\ot m}$. The case when $m=1$ follows from $i_Rd=D_Ri_R$. For $m\geq 2$, as $\bar{f}(\fraka)=i_R(a_0)P_R(\bar{f}(\fraka'))$, we have
\begin{align*}
\bar{f}(D_{\sshai(A)}(\fraka))&\stackrel{\eqref{eq:ddoa}}{=}\bar{f}(d(a_0)\ot\fraka'+a_0\shpr\fraka'-d(1_A)\shpr\fraka)\\
&=i_R(d(a_0))P_R(\bar{f}(\fraka')) +i_R(a_0)\bar{f}(\fraka')-i_R(d(1_A))\bar{f}(\fraka)\\
&=D_R(f(a_0))P_R(\bar{f}(\fraka')) +i_R(a_0)\bar{f}(\fraka')-D_R(1_R)\bar{f}(\fraka).
\end{align*}
On the other hand, 
\begin{align*}
D_R(\bar{f}(\fraka))&=D_R(i_R(a_0)P_R(\bar{f}(\fraka'))) \\
&\stackrel{\eqref{Dda}}{=}
D_R(i_R(a_0))P_R(\bar{f}(\fraka'))+i_R(a_0)D_R(P_R(\bar{f}(\fraka')))-D_R(1_R)i_R(a_0)P_R(\bar{f}(\fraka'))\\
&\stackrel{\eqref{Dra}}{=}D_R(i_R(a_0))P_R(\bar{f}(\fraka')) +i_R(a_0)\bar{f}(\fraka')-D_R(1_R)\bar{f}(\fraka).
\end{align*}
Hence, $\bar{f} D_{\sshai(A)} = D_R \bar{f}$. This completes the proof.
\end{proof}

We end the paper with applying Theorem~\mref{thm:freedrey} to the analytic setting in the previous section. Here the construction of the free \ocmdra can be made explicit.

\begin{prop}
Retaining the notions and assumptions of Proposition~\mref{pp:powerex}, if the coefficient of $x$ in $k(x)\in \RR[[x]]$ is nonzero, and $h(x)=\mu\frac{d}{dx}(1/k(x))$ for some $\mu\in\RR^*$, then the quintuple $(\RR[[x]],\RR[[x]]_n,D_K,P_K,i_{\RR[[x]]})$ is a \ocmdra over the \ddiff algebra $(\RR,\mu^{-1}\id_\RR)$ and is isomorphic to the free commutative \ocmdra\ $\shai_\RR(\RR, \mu^{-1}\id_\RR)$ over $(\RR, \mu^{-1}\id_\RR)$.
\mlabel{pp:powerex-free}
\end{prop}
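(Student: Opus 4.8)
The plan is to exhibit the asserted isomorphism as the canonical morphism out of the free object supplied by Theorem~\mref{thm:freedrey}, and then to verify that it is bijective by a leading-order (triangularity) argument on the filtration quotients.

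First I would check that the two extra hypotheses place us exactly in the setting of Proposition~\mref{pp:powerex} with $A=\RR$. Writing $k(x)=k(0)+k_1x+\cdots$ with $k(0)\neq 0$, the assumption $h=\mu(1/k)'$ gives constant term $h(0)=-\mu k_1/k(0)^2$, so the hypothesis that the coefficient $k_1$ of $x$ in $k$ is nonzero is precisely what makes $h$ invertible; thus $P_K$ and $D_K$ act on $\RR[[x]]$ as in Proposition~\mref{pp:powerex}. Moreover, by Eq.~\meqref{eq:done} we get $D_K(1)=(1/k)'/h=\mu^{-1}$, a constant, so $\RR$ is $D_K$-invariant and $d:=(D_K)|_{\RR}=\mu^{-1}\id_\RR$ makes $(\RR,\mu^{-1}\id_\RR)$ a \ddiff algebra. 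Taking $A=\RR$ in Proposition~\mref{pp:powerex} then yields that $(\RR[[x]],\RR[[x]]_n,D_K,P_K,i_{\RR[[x]]})$ is a \ocmdra over $(\RR,\mu^{-1}\id_\RR)$, establishing the first assertion.

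Next I would invoke the universal property in Theorem~\mref{thm:freedrey} to obtain the unique \ocmdra morphism $\bar f\colon\shai(\RR)\to\RR[[x]]$ over $(\RR,\mu^{-1}\id_\RR)$. Since $A=\RR$ and all tensor powers are over $\bfk=\RR$, the elements $e_m:=1_\RR^{\ot m}\in\RR^{\ot m}$ for $m\geq 1$ form a topological basis of $\shai(\RR)$ with $\shai(\RR)_n=\moverline{\langle e_m\mid m\geq n+1\rangle}$, and the explicit formula for $\bar f$ on pure tensors from the proof of Theorem~\mref{th:fcdr} reduces to
\[
\bar f(e_m)=P_K^{\,m-1}(1)=:g_m,\qquad m\geq 1,
\]
so that $g_1=1$ and $g_{m+1}=P_K(g_m)$.

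The crux is to show that $g_m$ has $x$-adic order exactly $m-1$ with nonzero leading coefficient $c_m$, which I would prove by induction: from $g_m=c_mx^{m-1}+(\text{higher order})$ one computes
\[
g_{m+1}=k(x)\int_0^x h(t)g_m(t)\,dt=\frac{k(0)h(0)}{m}\,c_m\,x^{m}+(\text{higher order}),
\]
so $c_{m+1}=k(0)h(0)c_m/m\neq 0$ because $k(0)\neq 0$ and $h(0)\neq 0$. Consequently, in the bases $\{e_1,\dots,e_n\}$ of $\shai(\RR)/\shai(\RR)_n$ and $\{1,x,\dots,x^{n-1}\}$ of $\RR[[x]]/\RR[[x]]_n$, the induced map $\bar f_n$ is lower triangular with nonzero diagonal $c_1,\dots,c_n$, hence an isomorphism for every $n$; passing to inverse limits (both modules being complete), $\bar f=\varprojlim\bar f_n$ is an isomorphism of complete filtered $\RR$-modules, and since it is already a \ocmdra morphism its inverse automatically intertwines $\shpr$, $D_K$ and $P_K$, so $\bar f$ is an isomorphism in $\cdrac$. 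I expect no serious obstacle here: the only genuine content is that the two hypotheses drive this induction, and the sole point to watch is that the leading coefficients $c_m$ remain nonzero at every step — secured exactly by the invertibility of $k$ (giving $k(0)\neq 0$) together with $h(0)\neq 0$ (forced by $k_1\neq 0$) — for if either failed, $\bar f$ would lose its triangular-with-nonzero-diagonal form and the argument would collapse.
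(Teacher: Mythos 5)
Your proposal is correct, and it diverges from the paper's proof at exactly one point: the verification that $P_K^{n}(1)$ has $x$-adic order exactly $n$. The paper establishes this by deriving the closed-form expression
\begin{equation*}
P_K^n(1)=\mu^n\,\frac{k(x)}{k(0)}\sum_{s\geq n}\frac{\ln^s\!\big(k(0)/k(x)\big)}{s!},\qquad n\geq 0,
\end{equation*}
by an inductive computation of the iterated integral (substituting $h=\mu(1/k)'$ so the integrand becomes $\ln^s(k(0)/k(t))\,d\big(\ln(k(0)/k(t))\big)$), and then reads off the order from the fact that $\ln(k(0)/k(x))$ has lowest-degree term of degree $1$ when $k'(0)\neq 0$. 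You instead track only the leading coefficient, obtaining the recursion $c_{m+1}=k(0)h(0)c_m/m$ with $c_1=1$; this is a correct computation and gives the order statement with less work, at the cost of not producing the explicit formula (which the paper reuses in Example~\mref{ex:rey2} to write $P_K^k(1)$ as an explicit power series for the kernel $e^{-x+t}$). Your treatment of the final step is actually more complete than the paper's: where the paper concludes ``As a result, $\phi$ is an isomorphism'' directly from $P_K^n(1)\in x^n\RR[[x]]\setminus x^{n+1}\RR[[x]]$, you spell out the implicit argument --- triangularity with nonzero diagonal of the induced maps on the filtration quotients, passage to inverse limits using completeness of both sides, and the automatic compatibility of the inverse with the operators and product. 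Both routes are sound; yours is the more elementary and self-contained, the paper's the more informative.
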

Thus the \ocmdra $(\RR[[x]],\RR[[x]]_n,D_K,P_K,i_{\RR[[x]]})$ gives an analytic realization of the free commutative \ocmdra\ $\shai_\RR(\RR, \mu^{-1}\id_\RR)$.

\begin{proof}
If the coefficient of $x$ in $k(x)$ is nonzero, then $h(x)=\mu\frac{d}{dx}(1/k(x))$ is invertible. Thus,
 $$D_K(1)=\frac{1}{h(x)}\frac{d}{dx}\left(\frac{1}{k(x)}\right)=\mu^{-1}$$
and hence is in $\RR$. Then the restriction of $D_K$ to $\RR$ gives a \ddiff operator
$$d:=D_K|_{\RR}=\mu^{-1}\id_\RR$$
on $\RR$. Hence we can take $\RR$ to be the intermediate ring $A$ in Proposition~\mref{pp:powerex} and conclude that the quintuple $(\RR[[x]],\RR[[x]]_n,D_K,P_K,i_{\RR[[x]]})$ is a \ocmdra with $(A,d)=(\RR,\mu^{-1}\id_\RR)$.

Applying the universal property of the free commutative \ocmdra\ $\shai_\RR(\RR, \mu^{-1}\id_\RR)$ over $(\RR,\mu^{-1}\id_\RR)$, there is a homomorphism of \ddiffrey algebras
\[\phi:\shai_\RR(\RR,\mu^{-1}\id_\RR)\rar\RR[[x]],\ 1^{\otimes(n+1)}\mapsto P_K^n(1),\ n\geq0.\]
Next we prove by induction on $n$ that
\begin{equation} \mlabel{eq:pexpan}
P_K^n(1)=
\mu^n\dfrac{k(x)}{k(0)}\sum_{s\geq n}\dfrac{\ln^s(k(0)/k(x))}{s!},\ n\geq0.
\end{equation}
Here $\ln (k(0)/k(x))$ is regarded as a power series expansion.
For $n=0$, the equality is clear as
$$\dfrac{k(x)}{k(0)}\sum_{s\geq 0}\dfrac{\ln^s(k(0)/k(x))}{s!}=
\dfrac{k(x)}{k(0)}e^{\ln(k(0)/k(x))}=\dfrac{k(x)}{k(0)}\dfrac{k(0)}{k(x)}=1.$$
Now for $n>0$, we have
\begin{align*}
P_K^n(1)&=P_K(P_K^{n-1}(1))=\dfrac{\mu^{n-1}}{k(0)}\sum_{s\geq n-1}\dfrac{P_K(k(x)\ln^s(k(0)/k(x)))}{s!}\\
&=\dfrac{\mu^{n-1}}{k(0)}\sum_{s\geq n-1}\dfrac{k(x)}{s!}\int_0^xh(t)
k(t)\ln^s(k(0)/k(t))dt\\
&=\mu^n\dfrac{k(x)}{k(0)}\sum_{s\geq n-1}\dfrac{1}{s!}\int_0^x
\dfrac{d}{dt}\left(\dfrac{1}{k(t)}\right)k(t)\ln^s(k(0)/k(t))dt\\
&=\mu^n\dfrac{k(x)}{k(0)}\sum_{s\geq n-1}\dfrac{1}{s!}\int_0^x
\ln^s(k(0)/k(t))d(\ln(k(0)/k(t)))\\
&=\mu^n\dfrac{k(x)}{k(0)}\sum_{s\geq n-1}\dfrac{1}{(s+1)!}\ln^{s+1}(k(0)/k(t))\bigg|_0^x\\
&=\mu^n\dfrac{k(x)}{k(0)}\sum_{r\geq n}\dfrac{1}{r!}\ln^r(k(0)/k(x)).
\end{align*}
Since $h(0)=-\mu k'(0)/k^2(0)\in\RR^*$ by assumption, and the power series expansion of $k(0)/k(x)$ is $1-(k'(0)/k(0))x$ modulo higher degree terms, the expansion of $\ln(k(0)/k(x))$ has the lowest term of degree $1$.
Therefore, using Eq.\meqref{eq:pexpan} we see that
$$P_K^n(1)\in x^n\RR[[x]]\setminus x^{n+1}\RR[[x]].$$
As a result, $\phi$ is an isomorphism.
\end{proof}

Here is an application to the classical example, revisiting Remark~\mref{rk:rey}. 
\begin{exam}
Taking $K(x,t)=e^{-x+t}$ as in Example~\mref{ex:sepex} and Remark~\mref{rk:rey}, then $P_K$ is a Reynolds operator (of weight $1$), since $D_K(1)=1$. It is a special case of  Proposition~\mref{pp:powerex-free} with $\mu=1$.
Therefore, there is an isomorphism of \ocmdras over $(\RR,\id_\RR)$ 
\[\phi:\shai_\RR(\RR,\id_\RR)\rar\RR[[x]],\ 1^{\otimes(k+1)}\mapsto
P_K^k(1)=\sum_{n\geq k}(-1)^{n-k}{n-1 \choose k-1}\dfrac{x^n}{n!},\ k\geq0,\]
with the convention that ${i \choose -1}=\delta_{i,-1}$. So the natural (power series) product on the right hand side corresponds to the complete shuffle product on the left hand side. 
\mlabel{ex:rey2}
\end{exam}

The next example can be regarded as a degenerated case of Proposition~\mref{pp:powerex-free}, reducing to the usual derivation and integration.
\begin{exam}
Consider the \ddiffrey algebra $(\RR[[x]],D_K,P_K)$ with kernel $K(x,t)=1$. Then  $D_K=\tfrac{d}{dx}$ and $P_K=\int_0^xdt$.
Since $D_K(1)=0$, the pair $(\RR,0)$ is a \ddiff algebra and there is a \ddiff algebra monomorphism
$i_{\RR[[x]]}:(\RR,0)\to (\RR[[x]],D_K)$.
Take the decreasing filtration $\{x^n\RR[[x]]\}_{n\geq 0}$ of $\RR$-submodules of $\RR[[x]]$.
As discussed in Example~\mref{ex:comp}, we know that $(\RR[[x]],x^n\RR[[x]],D_K,P_K,i_{\RR[[x]]})$ is a \ocmdra with $(A,d)=(\RR,0)$.
Then the universal property of the free commutative \ocmdra $\shai_\RR(\RR, 0)$ over $(\RR,0)$ gives a homomorphism of \ocmras over $(\RR,0)$,
\[\psi:\shai_\RR(\RR,0)\rar\RR[[x]],\ 1^{\otimes(n+1)}\mapsto P_K^k(1)=\dfrac{x^n}{n!},\ n\geq0,\]
which is actually an isomorphism.
In particular, $\psi\Big(\sum\limits_{k\geq1}1^{\otimes k}\Big)=\sum\limits_{n\geq0}\tfrac{x^n}{n!}=e^x$.
\end{exam}

The last example goes beyond the scope of Proposition~\mref{pp:powerex-free}, but still falls into the range of discussion in Proposition~\mref{pp:powerex}.
\begin{exam}
Consider the rational kernel $K(x,t)=\frac{1}{x^2+1}$. We have
$D_K(x^k)=kx^{k-1}+(k+2)x^{k+1}$ for $k\geq 0$. Hence, $D_K$ restricts to a \ddiff operator $d$ on the polynomial algebra $\RR[x]$ defined by
$$d(x^k)=kx^{k-1}+(k+2)x^{k+1},\quad k\geq0.$$
By Proposition~\mref{pp:powerex}, there is a \ddiff algebra monomorphism
$i_{\RR[[x]]}:(\RR[x],d)\to (\RR[[x]],D_K)$, and
$(\RR[[x]],\RR[[x]]_n,D_K,P_K,i_{\RR[[x]]})$ is a \ocmdra over $(A,d)=(\RR[x],d)$.

Applying the universal property of the free commutative \ocmdra $\shai_\RR(\RR[x],d)$, there is a homomorphism of \ddiffrey algebras
\[\varphi:\shai_\RR(\RR[x],d)\rar\RR[[x]],\,x^{\ot \alpha}=x^{a_1}\ot \cdots \ot x^{a_r} \mapsto
Q_\alpha(x), \quad \alpha=(a_1,\ldots,a_r)\in \NN^r, \]
where  $Q_\alpha(x)$ is the following analytic function given by iterated integrals.
\begin{align*}
x^{a_0}P_K&\Big(x^{a_1}P_K\big(
\cdots x^{a_{r-2}}P_K(x^{a_{r-1}}P_K(x^{a_r}))\cdots\big)\Big)\\
&=\frac{x^{a_0}}{x^2+1}\int_0^x \frac{x_1^{a_1}}{x_1^2+1} dx_1\cdots \int_0^{x_{r-2}}\frac{x_{r-1}^{a_{r-1}}}{x_{r-1}^2+1}dx_{r-1}\int_0^{x_{r-1}}x_r^{a_r}dx_r.
\end{align*}
Applying the multiplication formula of the free objects in Eq.~\meqref{eq:astexp2}, a product of two iterated integrals of this form can be expressed as a series of iterated integrals of the same form.
\end{exam}

\smallskip

\noindent
{\bf Acknowledgments.} This work is supported by Natural Science Foundation of China (12071094, 12171155) and Guangdong Basic and Applied Basic Research Foundation (2022A1515010357).  The authors thank Markus Rosenkranz for helpful discussions. 

\noindent
{\bf Declaration of interests. } The authors have no conflicts of interest to disclose.

\smallskip

\noindent
{\bf Data availability. } No new data were created or analyzed in this study.

\vspace{-.2cm}

\end{document}